\newtheorem{theorem}{Theorem}[section]
\newtheorem{proposition}[theorem]{Proposition}
\newtheorem{lemma}[theorem]{Lemma}
\newtheorem{corollary}[theorem]{Corollary}
\theoremstyle{definition}
\newtheorem{example}[theorem]{Example}
\theoremstyle{definition}
\newtheorem{rem}[theorem]{Remark}
\newtheorem{remark}[theorem]{Remark}
\newtheorem{definition}[theorem]{Definition}
\numberwithin{claim}{theorem}
\renewenvironment{proof}{\textit{Proof.}}{\hfill\ensuremath{\qed}}
\def \qed{\hfill{\hbox{$\square$}}}
\numberwithin{equation}{section}
\begin{document}
\title[Solitons on Hypersurfaces of $\mathbb{Q}^{n}_{\epsilon} \times \mathbb{R}$]{Almost Ricci Solitons on Class $\mathcal A$  
Hypersurfaces of Product Spaces}

\author[A. U. \c Corapl\i]{Ahmet Umut \c Corapl\i}
\address{Department of Mathematics, Faculty of Science and Letters, Istanbul Technical University, \.{I}stanbul, T{\"u}rk\.{I}ye}
\email{corapli18@itu.edu.tr}

\author[B. Bekta\c{s} Dem\.{i}rc\.i]{Burcu Bekta\c s Dem\.{i}rc\.i}
\address{Fatih Sultan Mehmet Vak{\i}f University, Topkap{\i} Campus, Faculty of Engineering,
Department of Software Engineering, \.{I}stanbul, T{\"u}rk\.{I}ye}
\email{bbektas@fsm.edu.tr}

\author[N. Cenk Turgay]{Nurettin Cenk Turgay}
\address{Department of Mathematics, Faculty of Science and Letters, Istanbul Technical University, \.{I}stanbul, T{\"u}rk\.{I}ye}
\email{nturgay@itu.edu.tr}

\subjclass[2010]{53A10(Primary), 53C42}
\keywords{Product Spaces, Class $\mathcal{A}$ surfaces, Ricci Solitons.} 

\begin{abstract}
In this paper, we study hypersurfaces in the product spaces $\mathbb{Q}_{\epsilon}^3 \times \mathbb{R}$ for which the tangential component $T$ of the vector field $\frac{\partial}{\partial t}$ is a principal direction, where $\mathbb{Q}_{\epsilon}^3$ denotes the three-dimensional non-flat Riemannian space form with sectional curvature $\epsilon = \pm 1$, and $\frac{\partial}{\partial t}$ is the unit vector field tangent to the $\mathbb{R}$-factor. We obtain a local classification of hypersurfaces with three distinct principal curvatures satisfying specific functional relations. Then, we determine the necessary and sufficient conditions for such hypersurfaces to admit an almost Ricci soliton structure with potential vector field $T$. Finally, we prove that the only hypersurfaces admitting such solitons are rotational, by showing that the constructed examples with three distinct principal curvatures do not admit almost Ricci solitons.
\end{abstract}

\maketitle

\section{Introduction}
Arising naturally in the study of singularity models and the structure of manifolds evolving under the Ricci flow, \textit{Ricci solitons} are self-similar solutions to the Ricci flow that generalize Einstein metrics and have attracted significant interest due to their applications in geometric analysis and mathematical physics. After Perelman's work \cite{Perelman}, which introduced the entropy formula for the Ricci flow and established its geometric applications, numerous studies have been devoted to the theory and applications of Ricci solitons, \cite{Brozos12,Cao06,Cao09,Chen16}.

From the perspective of submanifold's theory, B.-Y. Chen and S. Deshmukh provided study classification of Ricci solitons 
on Euclidean spaces, specifically the ones naturally arising from the tangential component of position vector field of an hypersurface in  
\cite{Chen_Deshmukh2} and \cite{Chen_Deshmukh1}. Then, H. Al-Sodias \textit{et al.} obtained necessary and sufficient condition for a hypersurface in Euclidean space to be a gradient Ricci soliton in \cite{Sodais}. Moreover, \c{S}.E. Meri\c{c} and E. K{\i}l{\i}\c{c} considered under which condition a submanifold of
a Ricci soliton is also a Ricci soliton and they gave the relation 
between intrinsic and extrinsic invariants of a Riemannian
submanifold which admits a Ricci soliton in \cite{Meric}.
The second author \cite{Demirci} studied Ricci solitons on pseudo-Riemannian hypersurfaces
of 4-dimensional Minkowski spaces.

While the notion of Ricci solitons have a constant soliton parameter, 
this parameter can be relaxed to allow a richer class of geometric structure.
This leads to the notion of an almost Ricci soliton, where the soliton constant is replaced by a smooth function on the manifold.
S. Pigola et al. extended the concept of a gradient Ricci soliton to an almost Ricci soliton in \cite{Pigola}.
Following its introduction, the almost Ricci soliton has become a topic of active research, yielding several important findings, 
 \cite{Barros,Borges22,Guler20}. 

On the other hand, intrinsic and extrinsic properties of hypersurfaces in products of space forms have been studied by many geometers \cite{Dillen09,Dillen12,Fetcu15,Manfio25,MendonTojeiro2014,Tojeiro2010}, where particular attention has been given to hypersurfaces of $\mathbb{Q}^n_\epsilon \times \mathbb{R}$ satisfying certain conditions involving the tangential component $T$ of the vector field defined by the decomposition
\begin{equation}
\label{decompt} 
\frac{\partial}{\partial x_{n+2}} = T + \sigma N,
\end{equation}
where $\mathbb{Q}^n_\epsilon$ denotes either the $n$-dimensional sphere $\mathbb{S}^n$ or the hyperbolic $n$-space $\mathbb{H}^n$, and $\frac{\partial}{\partial x_{n+2}}$ is the vector field tangent to the $\mathbb{R}$-factor of the product space. For example, in \cite{Tojeiro2010}, the following definition is given (See also \cite{MendonTojeiro2014}).
\begin{definition}\cite{Tojeiro2010}
The hypersurface $M$ is said to belong to class $\mathcal A$ if $T$ is a principal direction of $M$.
\end{definition}
Also the local classification of class $\mathcal A$ hypersurfaces were obtained in \cite{Tojeiro2010}. 
Further, in \cite{Dillen09}, Dillen \textit{et al.} studied rotational hypersurfaces in $\mathbb{Q}^n_\epsilon\times\mathbb{R}$ and they proved the following
classification theorem of hypersurfaces with two distinct principal curvatures with a certain functional relation.
\begin{theorem} \label{thmk2eqk3}
\cite{Dillen09}
Take $n \geq 3$ and let $f : M^n \rightarrow \mathbb{Q}^n_\epsilon\times\mathbb{R}$ be a hypersurface with
shape operator
\begin{align}
S=\left(
\begin{array}{ccccc}
\lambda &  &  &  & \\
 &   \mu &  &  & \\
 &  &  \ddots & & \\
 &  &        &   \mu& \\
\end{array}
\right),
\end{align}
with $\lambda\neq\mu$ and suppose that $ST=\lambda T$. Assume moreover that there is a functional relation 
$\lambda(\mu)$. Then, $M$ is an open part of a rotation hypersurface.
\end{theorem}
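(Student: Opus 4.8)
The plan is to extract from the two standard structural equations for hypersurfaces of $\mathbb{Q}^n_\epsilon\times\mathbb{R}$ a closed system of first-order relations for the principal curvatures and the angle function, and then integrate it using the functional relation $\lambda(\mu)$. Since $\partial/\partial x_{n+2}$ is parallel in the ambient product, differentiating the decomposition \eqref{decompt} and splitting into tangential and normal parts yields $\nabla_X T=\sigma SX$ and $X(\sigma)=-\langle SX,T\rangle$ for every tangent $X$. Using the hypothesis $ST=\lambda T$, the second identity becomes $\grad\sigma=-\lambda T$. On the open dense set where $T\neq 0$ (note that $T\equiv 0$ forces $\sigma=\pm1$ and $S=0$, contradicting $\lambda\neq\mu$), I would fix the orthonormal frame $e_1=T/\|T\|$, with $Se_1=\lambda e_1$, together with $e_2,\dots,e_n$ spanning the $\mu$-eigenspace. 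Feeding $e_i$ ($i\ge2$) into $\nabla_X T=\sigma SX$ immediately gives $e_i(\|T\|)=0$, hence $e_i(\sigma)=0$, together with the connection coefficient $\langle\nabla_{e_i}e_1,e_i\rangle=\sigma\mu/\|T\|$ and, along $e_1$, the relation $e_1(\|T\|)=\sigma\lambda$; so both $\sigma$ and $\|T\|$ are constant along the $\mu$-distribution.

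The next step is the Codazzi equation. Because the ambient curvature of $\mathbb{Q}^n_\epsilon\times\mathbb{R}$ evaluated on the normal reduces to $(\bar R(X,Y)N)^\top=\epsilon\sigma(\langle X,T\rangle Y-\langle Y,T\rangle X)$, Codazzi reads $(\nabla_X S)Y-(\nabla_Y S)X=\epsilon\sigma(\langle X,T\rangle Y-\langle Y,T\rangle X)$. Taking $X=e_i$, $Y=e_j$ with $i\ne j$ and $i,j\ge2$ makes the right-hand side vanish; since the $\mu$-eigenspace has dimension $n-1\ge2$, the resulting identities force $e_i(\mu)=0$ for all $i\ge2$ and show that the $\mu$-distribution is totally umbilical in $M$ with a well-defined spherical mean curvature, i.e.\ its leaves are extrinsic spheres. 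Taking instead $X=e_1$, $Y=e_i$ and reading off the $e_i$- and $e_1$-components produces $e_i(\lambda)=0$ together with a single scalar evolution equation along the integral curves of $e_1$ relating $e_1(\mu)$, $\lambda$, $\mu$, $\sigma$ and $\|T\|$.

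With these relations in hand, I would invoke the functional relation $\lambda=\lambda(\mu)$ to collapse the system to one variable: $\lambda$, $\mu$ and $\sigma$ all become functions of the single arclength parameter $s$ of the integral curves of $e_1$, and the Codazzi evolution equation, combined with $\grad\sigma=-\lambda T$ and $e_1(\|T\|)=\sigma\lambda$, becomes an ordinary differential equation in $s$. Integrating it determines $\mu(s)$, $\sigma(s)$ and the umbilical (spherical) second fundamental form of the leaves up to constants of integration. At this point the geometry is pinned down: $M$ carries a totally umbilical foliation by extrinsic spheres along a profile curve $\gamma$ that is the common integral curve of $e_1=T/\|T\|$, which is precisely the local structure of a rotation hypersurface of $\mathbb{Q}^n_\epsilon\times\mathbb{R}$.

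The final step is to match this abstract foliation with the explicit parametrization of rotation hypersurfaces, recognizing the spherical leaves as the orbits of the one-parameter group of ambient isometries fixing a totally geodesic axis and the curve $\gamma$ as the profile. I expect the main obstacle to lie exactly here: the Codazzi and Gauss equations only guarantee that the leaves are round spheres that are umbilical in $M$, and one must upgrade this to the statement that they are genuine rotational orbits sharing a single axis. This is where the functional relation is indispensable, since it is what forces the spherical leaves to stay coherently centered along $\gamma$; without it the umbilical foliation need not assemble into a rotation hypersurface. Verifying that the reconstructed immersion is invariant under the rotation group --- equivalently, that the fundamental data $(\lambda(s),\mu(s),\sigma(s))$ satisfy the Gauss equation identically and hence determine the hypersurface uniquely up to ambient isometry --- completes the proof.
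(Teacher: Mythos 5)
First, a point of reference: the paper does not actually prove Theorem \ref{thmk2eqk3} --- it is quoted from \cite{Dillen09} and used as a black box --- so there is no internal proof to compare you against. Judged on its own terms, the structural part of your outline is sound and closely mirrors what the authors do in Section \ref{SectQ3xR-3princip} for the harder three-curvature analogue: the identities $\nabla_X T=\sigma SX$ and $X(\sigma)=-\langle SX,T\rangle$, the constancy of $\sigma$, $\|T\|$ and $\mu$ along the leaves of the distribution orthogonal to $T$, the integrability and umbilicity of that distribution, and the single Codazzi ODE along $e_1$ are all correct and correspond to \eqref{paralleltang}--\eqref{paralleltang3}, \eqref{Sect3AraDenk1} and \eqref{ConhatclassAeqNALL}.

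The gap is in the final step, and you flag it yourself (``I expect the main obstacle to lie exactly here''): knowing that $M$ is foliated by extrinsic spheres with all invariants functions of $s$ alone is strictly weaker than $M$ being a rotation hypersurface, and the passage from the former to the latter \emph{is} the theorem; deferring it is not proving it. The paper's Section \ref{SectQ3xR-3princip} shows concretely what this step requires in the analogous situation: pass to the flat ambient space, write the frame equations \eqref{ConhatclassAeqNALL} for the position vector, observe that the leaf-direction equations become constant-coefficient linear ODEs (the constancy being the content of Remark \ref{C_D_const}), integrate them explicitly, and then prove that the resulting constant vectors form an orthonormal set so that an ambient isometry puts the immersion into the standard rotational parametrization. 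None of this appears in your proposal. A second, related problem: in the deductions you actually carry out, the hypothesis $\lambda=\lambda(\mu)$ is never used --- $e_i(\mu)=0$ follows from Codazzi among the leaf directions since the $\mu$-eigenspace has dimension $n-1\ge 2$, and $e_i(\lambda)=0$ follows from the mixed Codazzi equation together with $\nabla_{e_1}e_1=0$ (compare \eqref{CodazziclassA2}, which the paper derives with no functional relation) --- yet you then assert, without any computation, that the functional relation ``forces the spherical leaves to stay coherently centered.'' A complete proof must exhibit where this hypothesis does its work (or argue that it is redundant in the class-$\mathcal A$ setting); as written, yours does neither, and the reconstruction of the rotational structure remains entirely to be done.
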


In this paper, as a continuation of Theorem \ref{thmk2eqk3}, we consider the hypersurfaces in $\mathbb{Q}^3_\epsilon\times\mathbb{R}$ with three distinct principal curvatures $k_1,k_2,k_3$ satisfying functional relations $k_2=f_1(k_1)$ and $k_3 = f_2(k_1)$ for some smooth functions $f_1$ and $f_2$. In particular, we get the local classification of hypersurfaces satisfying this property. 
Then, we investigate whether such hypersurfaces can admit an almost Ricci soliton structure with potential vector field given by  $ T  $. 
Finally, we show that the only hypersurfaces in  $\mathbb Q^3_\epsilon \times \mathbb{R}  $ satisfying  $ ST = \lambda T  $ and admitting an almost Ricci soliton structure are rotational hypersurfaces.


\section{Preliminaries}
Let $\mathbb E^{n+2}_r$ denote the $n+2$-dimensional semi-Euclidean space with the index $r$ given by the metric tensor
$$\widetilde g=\langle \cdot,\cdot\rangle=-\sum\limits_{i=1}^r dx_i^2+\sum\limits_{i=r+1}^n dx_i^2$$
and the Levi-Civita connection $\widehat{\nabla}$. We are going to use the notation $\mathbb E^{n+2}_0=\mathbb{E}^{n+2}$ and 
$\mathbb E^{n+2}_1=\mathbb{L}^{n+2}$ for Euclidean and Minkowski spaces, respectively.

Let $\mathbb{Q}^n_\epsilon$ stand for the $n$-dimensional non-flat Riemannian space with the sectional curvature $\epsilon=\pm1$. 
Throughout this paper, we are going to consider the product spaces $\mathbb{S}^n\times\mathbb{R}$ and $\mathbb{H}^n\times\mathbb{R}$ defined by
\begin{align}
\mathbb{S}^n\times\mathbb{R}&=\{(x_1, \dots, x_{n+2})\in\mathbb{E}^{n+2}\;|\; x_1^2+x_2^2+\cdots+x_{n+1}^2=1\}\\
\mathbb{H}^n\times\mathbb{R}&=\{(x_1, \dots, x_{n+2})\in\mathbb{L}^{n+2}\;|\; -x_1^2+x_2^2+\cdots+x_{n+1}^2=-1,\;x_1>0\}.
\end{align} 
Let $e_{n+2}$ denote the unit normal vector field of $\mathbb{Q}^n_\epsilon$ on the respective flat space $\mathbb{E}^{n+2}$ or $\mathbb{L}^{n+2}$. Note that we have
$$\left.e_{n+2}\right._{(x_1, \dots, x_{n+2})}=(x_1, \dots, x_{n+1}, 0)$$
and $\langle e_{n+2}, e_{n+2}\rangle = \epsilon$.


\subsection{Submanifolds of $\mathbb{Q}^n_\epsilon\times\mathbb{R}$}
Let $M^{n}$ be an oriented hypersurface of $\mathbb{Q}^{n}_{\epsilon} \times \mathbb{R}$ with the unit normal vector field $N$.  The Levi-Civita connections of $M$ and $\mathbb{Q}^n_{\epsilon} \times\mathbb{R}$ are going to be denoted by $\nabla$ and $\widetilde{\nabla}$, respectively. The Gauss and Weingarten formul\ae\ are given by
\begin{align*}
	\tilde{\nabla}_{X}Y=& \nabla_{X}Y + h(X,Y) \\	
 \tilde{\nabla}_{X}N =& -SX, 
\end{align*}
for all vector fields $X$ and $Y$ tangent to $M$, where 
$h$ is the second fundamental form and $S$ is the shape operator of $M$. The second fundamental form and the shape operator are related by 
$$\langle h(X,Y),\eta \rangle  = \langle SX,Y \rangle.$$ 
Furthermore, $\widetilde{\nabla}$ and $\widehat{\nabla}$ have the relation
\begin{align}
\label{LeviCivitaproduct}
\widehat{\nabla}_{Y}Z= \widetilde{\nabla}_{Y}Z - \epsilon (\langle Y,Z \rangle - \langle Y, T \rangle \langle T,Z\rangle)e_{n+2},
\end{align}
where, through a slight misuse of notation, we put $e_{n+2}=\left.e_{n+2}\right|_{M}$.

Let $R$ denote the curvature tensor of $M$ and $\nabla ^\perp h$ stands for the covariant derivative of $h$, that is,
$$(\nabla _{X} ^\perp h) (Y,Z) =\nabla _{X} ^\perp h (Y,Z) - h (\nabla _{X}Y,Z) -  h (Y,\nabla _{X}Z).$$
Then, the Gauss and Codazzi equations 
\begin{align}
\label{Gausseq}
	R(X,Y,Z,W) =& \langle h(Y,Z),h(X,W)\rangle-\langle h(X,Z),h(Y,W)\rangle \\\nonumber&+ \epsilon\langle\left( X\wedge Y + \langle X,T \rangle Y \wedge T - \langle Y,T \rangle X \wedge T\right)Z,W\rangle,  \\
 \label{Codazzieq}
	(\nabla _{X} ^\perp h) (Y,Z) - (\nabla_{Y} ^\perp h) (X,Z) =& \epsilon \langle (X \wedge Y)T,Z \rangle \xi
\end{align}
are  satisfied  for all vector fields $X,Y,Z,W$ tangent to $M$, where we put 
$$(X \wedge Y)Z = \langle Y,Z \rangle X - \langle X,Z \rangle Y.$$

One can define a tangent vector field $T$ and a smooth function $\sigma$ on $M$ by the decomposition \eqref{decompt}. Since $\frac{\partial}{\partial {x_{n+2}}}$ is a parallel vector field in $\mathbb{Q}^{n}_{\epsilon} \times \mathbb{R}$, 
the equations
\begin{align}
\label{paralleltang}	\nabla_{X}T =& \sigma SX,\\
\label{parallelnor}	h(X,T) =& - X(\sigma)N,
\end{align}
are satisfied for all $X \in TM$, \cite{Tojeiro2010}.

\begin{remark}
Throughout this article, by excluding the trivial cases (see \cite{MendonTojeiro2014,Tojeiro2010}), we assume that the vector field $T$ is non-vanishing on $M$. Therefore, we consider an orthonormal frame field $\{e_1, \dots, e_n, N\}$ on $M$ such that $e_1$ is proportional to $T$ with the corresponding  connection 1-forms $\omega_{ij}$ defined by
$$\omega_{ij}(e_k)=\langle \nabla_{e_k} e_i, e_j \rangle.$$
\end{remark}

Therefore, \eqref{decompt} turns into
\begin{equation}
\label{RWTS-etaandTDefNew}
\left.\frac{\partial}{\partial {x_{n+2}}}\right|_M=\cos\theta \, e _1+\sin\theta \, N,  
\end{equation}
for a smooth function $\theta$. From now on, we shall use the indices
$$i,j,k = 1,2,\dots,n \quad \text{and} \quad a,b,c = 2,3,\dots,n.$$
Then, \eqref{paralleltang} gives
\begin{align}
\label{paralleltang1}
	Se_{i} = \cot \theta\nabla_{e_{i}}e_{1}  - e_{i}(\theta)e_{1}.
\end{align}
Note that \eqref{paralleltang1} implies
 \begin{align}
 \label{paralleltang2}
	\langle Se_{i},e_{1} \rangle &=   -e_{i}(\theta), \\
 \label{paralleltang3} 
	\langle Se_{i},e_{a} \rangle &=  \cot \theta \omega_{1a}(e_{i})
\end{align}
from which we have 
 \begin{equation*}
e_{a}(\theta)= - \cot \theta \omega_{1a}(e_{1}),\qquad \omega_{1a}(e_b)=\omega_{1b}(e_a).
\end{equation*}


\subsection{Almost Ricci Soliton}
Let $(M,g)$ be a Riemannian manifold. Then its  Ricci tensor is a symmetric $(0,2)$ tensor defined by 
\begin{equation*}
\mbox{Ric}(X,Y)=\mbox{trace}\{Z\hookrightarrow R(Z,X)Y\}
\end{equation*}
or, equivalently, 
\begin{equation}
\label{Riccicurv}
\mbox{Ric}(X,Y)=\sum_{i=1}^n \langle R(e_i,X)Y, e_i\rangle
\end{equation}
where $e_1, e_2, \dots, e_n$ is an orthonormal frame field of the tangent bundle of $M$. 

A smooth vector field $\xi$ on a Riemannian manifold  defines a \emph{almost Ricci soliton} 
if and only if it satisfies
\begin{align}
\label{defsoliton}
\frac{1}{2} \mathcal{L}_{\xi}g + Ric = \lambda g,
\end{align}
where $\mathcal{L}_{\xi}g$ is the Lie derivative of the metric tensor $g$ with respect to $\xi$, 
$\mbox{Ric}$ is the Ricci tensor of $(M,g)$ and 
$\lambda$ is a smooth function on $M$. An almost Ricci soliton is denoted by $(M,g,\xi,\lambda)$. 

The vector field $\xi$ is called the \emph{potential field} of the almost Ricci soliton.
The Ricci soliton $(M,g,\xi,\lambda)$ is \emph{shrinking, steady} or \emph{expanding} if $\lambda>0$, $\lambda=0$ and $\lambda<0$, respectively.
The almost Ricci soliton $(M,g,\xi,\lambda)$ is called a \emph{gradient Ricci soliton} 
if its potential field $\xi$ is the gradient of a smooth function $f$ on $M$. A gradient Ricci soliton is denoted by $(M,g,f,\lambda)$ and $f$ is called the \emph{potential function}. Note that when $\xi$ is a Killing vector field, i.e., $\mathcal{L}_{\xi} g=0$, 
an almost Ricci soliton $(M, g, \xi, \lambda)$ is an Einstein manifold. For a constant $\lambda$, it becomes a Ricci soliton.


\section{Hypersurfaces of $\mathbb{Q}^{3}_{\epsilon}\times \mathbb{R}$}\label{SectQ3xR-3princip}
In this section, we are going to consider hypersurfaces of 
$\mathbb{Q}^{3}_{\epsilon}\times \mathbb{R}$ such that the tangent vector $T$ defined by \eqref{decompt} 
is an eigenvector of the shape operator $S$. 

\subsection{Examples of Hypersurfaces with Three Distinct Principal Curvatures}
In this subsection, we construct two classes of hypersurfaces of $\mathbb{Q}^3_\epsilon\times\mathbb{R}$ with three distinct principal curvatures.

In the next example,  we have $\varepsilon=1$.
\begin{example}
\label{exS3xR}
Consider the following hypersurface $M$ of $\mathbb{S}^3\times\mathbb{R}$:
\begin{align}\label{positionexs3xR}
\begin{split}{ x}(s,v,w) = &\left(\cos{(\alpha_1(s))}\cos{v}, \cos{(\alpha_1(s))}\sin{v}, 
\sin{(\alpha_1(s))}\cos{w}, \sin{(\alpha_1(s))}\sin{w},\right. \\
&\left.\alpha_{2}(s)\right),
\end{split}
\end{align}
where $\alpha_1$ and $\alpha_2$ are some smooth functions satisfying 
\begin{align}
\label{positionexs3xREq2} \alpha_1'^2(s)+\alpha_2'^2(s)=&1
\end{align}
and $\cos\left(\alpha_1(s)\right), \sin\left(\alpha_1(s)\right) > 0$.

We choose an orthonormal frame field $\{e_1, e_2, e_3, N\}$ on $M$ such that 
$e_1, e_2, e_3$ are tangent to $M$ and $N$ is normal to $M$:
\begin{align}
\begin{split}
\label{orthexS3xR}
e_1=& \frac{\partial}{\partial s},\\
e_2=& \frac{1}{\cos{\alpha_1(s)}}\frac{\partial}{\partial v}, \\
e_3=& \frac{1}{\sin{\alpha_1(s)}}\frac{\partial}{\partial w},\\
N =& \left(\alpha^{\prime}_2 (s)\sin(\alpha_1 (s))\cos v, \alpha^{\prime}_2 (s)\sin(\alpha_1 (s))\sin v,
-\alpha^{\prime}_2 (s)\cos(\alpha_1 (s))\cos w, \right.\\
&\left.-\alpha^{\prime}_2 (s)\cos(\alpha_1 (s))\sin w,\alpha^{\prime}_{1}(s)\right).
\end{split}
\end{align}
The shape operator $S$ of $M$ is given by
\begin{align}
\label{shapeopexS3xR}
S = \begin{pmatrix}
\alpha^{\prime}_1(s)\alpha^{\prime\prime}_2(s)-\alpha^{\prime}_2(s)\alpha^{\prime\prime}_1(s)  & 0 & 0\\
0 & -\alpha^{\prime}_2(s)\tan{(\alpha_1(s))} & 0\\
0 & 0 & \alpha^{\prime}_2(s)\cot{(\alpha_1(s))}
\end{pmatrix}.
\end{align}
Note that the diagonal entries of $S$ are the principal curvatures $k_1$, $k_2$, and $k_3$, respectively. A further computation yields that the vector field $T$ is proportional to $e_1$ and, thus, is a principal direction of $M$. Moreover, we have
\begin{equation*}
\omega_{12}(e_2)= - \alpha^{\prime}_1(s)\tan{(\alpha_1(s))}\mbox{ and }\omega_{13}(e_3)= \alpha^{\prime}_1(s)\cot{(\alpha_1(s))}.
\end{equation*}
\end{example}

We have the next example for the case $\varepsilon=-1$.
\begin{example}
\label{exH3xR}
Consider the following hypersurface $M$ of $\mathbb{H}^3\times\mathbb{R}$:
\begin{align}
\label{positionexH3xR}
\begin{split}
{ x}(s,v,w) =& \left(\cosh(\alpha_1(s))\cosh v, \cosh(\alpha_1(s))\sinh v,\sinh(\alpha_1(s))\cos w,\right.\\&\left.\sinh(\alpha_1(s))\sin w, \alpha_2(s)\right),
\end{split}
\end{align}
where $\alpha_1$ and  $\alpha_2$ are some smooth functions satisfying 
\begin{align}
\label{positionexH3xREq2}
\alpha_1'^2(s)+\alpha_2'^2(s)=1
\end{align}
and $\sinh(\alpha_1(s))>0$.
We choose an orthonormal frame field $\{e_1, e_2, e_3, N\}$ on $M$ such that 
$e_1, e_2, e_3$ are tangent to $M$ and $N$ is normal to $M$:
\begin{align}\label{orthexH3xR}
\begin{split}
    e_1=& \frac{\partial}{\partial s},\\
    e_2=& \frac{1}{\cosh{(\alpha_1(s))}}\frac{\partial}{\partial v}, \\
    e_3=& \frac{1}{\sinh{(\alpha_1(s)})}\frac{\partial}{\partial w},\\
    N =& \left(-\alpha'_2(s)\sinh{(\alpha_1(s))}\cosh{v} , -\alpha'_2(s)\sinh{(\alpha_1(s))}\sinh{v} ,
    -\alpha'_2(s)\cosh{(\alpha_1(s))}\cos{w} ,\right.\\
		&\left.-\alpha'_2(s)\cosh{(\alpha_1(s))}\sin{w}, \alpha'_1(s)\right).
 \end{split}   
\end{align}
The shape operator $S$ of $M$ is given by
\begin{align}
\label{shapeopexH3xR}
S = \begin{pmatrix}
\alpha'_1(s)\alpha''_2(s)-\alpha'_2(s)\alpha''_1(s)  & 0 & 0\\
0 & \alpha'_2(s)\tanh{(\alpha_1(s))} & 0\\
0 & 0 & \alpha'_2(s)\coth{(\alpha_1(s))}
\end{pmatrix}
\end{align}
Note that the diagonal entries of $S$ are the principal curvatures $k_1$, $k_2$, and $k_3$, respectively. A further computation yields that the vector field $T$ is proportional to $e_1$ and, thus, is a principal direction of $M$. Moreover, we have
\begin{equation*}
\omega_{12}(e_2)= \alpha^{\prime}_1(s)\tanh\left(\alpha_1(s)\right)\mbox{ and }\omega_{13}(e_3)= \alpha^{\prime}_1(s)\coth\left(\alpha_1(s)\right).
\end{equation*}
\end{example}


\subsection{Local Classification Theorem}
In this subsection, we consider the hypersurfaces in $\mathbb{Q}^3_\epsilon\times\mathbb{R}$ with three distinct principal curvatures which satisfy certain functional relations.

Let $M$ be a hypersurface of 
$\mathbb{Q}^{3}_{\epsilon}\times \mathbb{R}$ whose shape operator  has the matrix representation
\begin{equation}\label{shapeop_m}
S=\left(
\begin{array}{ccc}
k_1 & 0 & 0\\
0 & k_2 & 0 \\
0 & 0 & k_3
\end{array}
\right),
\end{equation}
with respect to $\{e_1, e_2, e_3\}$.

\begin{remark}\label{RemarkSct32Rem1}
In the remaining part of this section, we are going to assume that $e_1$ is the tangent vector field defined by \eqref{RWTS-etaandTDefNew}, $k_i-k_j$ does not vanish on $M$ whenever $i\neq j$  and there exist some smooth functions $f_1$ and $f_2$ such that $k_2=f_1(k_1)$ and $k_3=f_2(k_1)$. 
\end{remark}

In this case, from \eqref{paralleltang2} and \eqref{paralleltang3} we have
\begin{align}
\label{Sect3AraDenk1}
\begin{split}
k_1=-e_1(\theta), \quad  k_a=\cot \theta \omega_{1a}(e_a), \quad e_a(\theta)=0, \quad \omega_{1a}(e_1)=\omega_{1a}(e_b)=0\mbox{ if $a\neq b$},
\end{split}
\end{align}
respectively.  Furthermore, by combining the Codazzi equation \eqref{Codazzieq} with \eqref{shapeop_m} and \eqref{Sect3AraDenk1}, we obtain
\begin{subequations}\label{CodazziclassA} to get
\begin{eqnarray}
\label{CodazziclassA1}	\omega_{23}(e_1)&=&0,\\
\label{CodazziclassA2}  e_2(k_1)=e_3(k_1)&=&0,\\
\label{CodazziclassA3}	e_1(k_2)+(k_2-k_1)\omega_{12}(e_2)&=&-\epsilon\cos{\theta}\sin{\theta},\\
\label{CodazziclassA4} e_3(k_2)&=&(k_2-k_3)\omega_{23}(e_2),\\
\label{CodazziclassA5} e_1(k_3)+(k_3-k_1)\omega_{13}(e_3)&=&-\epsilon\cos{\theta}\sin{\theta},\\
\label{CodazziclassA6} e_2(k_3)&=&(k_2-k_3)\omega_{23}(e_3).
\end{eqnarray}
\end{subequations}
Note that the functional relations between the principal curvatures given in Remark \ref{RemarkSct32Rem1}  imply $e_a(k_b) = 0$ because of \eqref{CodazziclassA2}. Therefore, from \eqref{CodazziclassA2} and  \eqref{CodazziclassA4} we have
\begin{equation}\label{Sect3AraDenk2}
\omega_{23}(e_2)=\omega_{23}(e_3)=0.
\end{equation}

By summing up \eqref{Sect3AraDenk1}, \eqref{CodazziclassA1} and \eqref{Sect3AraDenk2} with the Gauss Formula and \eqref{LeviCivitaproduct}, we obtain
\begin{subequations}\label{ConhatclassAeqNALL}
\begin{eqnarray}
\label{ConhatclassAeqN1} \widehat{\nabla}_{e_1}e_1 &=& -e_1(\theta) {N}- \epsilon \sin^2 \theta e_5,\\
\label{ConhatclassAeqN2} \widehat{\nabla}_{e_1}e_2 &=& \widehat{\nabla}_{e_1}e_3 =0, \\
\label{ConhatclassAeqN2b} \widehat{\nabla}_{e_2}e_3 &=& \widehat{\nabla}_{e_3}e_2 =0, \\
\label{ConhatclassAeqN3} \widehat{\nabla}_{e_2}e_1 &=&  \omega_{12}(e_2)e_2, \\
\label{ConhatclassAeqN4} \widehat{\nabla}_{e_2}e_2 &=& -\omega_{12}(e_2)e_1 + \cot \theta \omega_{12}(e_2)N - \epsilon e_5,\\
\label{ConhatclassAeqN5} \widehat{\nabla}_{e_3}e_1 &=& \omega_{13}(e_3)e_3,\\
\label{ConhatclassAeqN6} \widehat{\nabla}_{e_3}e_3 &=& -\omega_{13}(e_3)e_1 + \cot \theta \omega_{13}(e_3)N - \epsilon e_5.
\end{eqnarray}
\end{subequations}

In the next lemma, we construct a local coordinate system on $M$.
\begin{lemma}\label{Lemma1ofSect32}
Let $M$ be a hypersurface of $\mathbb{Q}^3_\epsilon\times\mathbb{R}$ satisfying the assumptions given in Remark \ref{RemarkSct32Rem1} and $p\in M$. Then, there exists a local coordinate system $(\mathcal O_p,(s,v,w))$ such that $p\in\mathcal O_p$ and 
\begin{align}\label{Lemma1ofSect32Eq1}
\begin{split}
e_1&=\frac{\partial}{\partial s},\\
e_2&=\frac{1}{\phi_2(s)}\frac{\partial}{\partial v},\\
e_3&=\frac{1}{\phi_3(s)}\frac{\partial}{\partial w},
\end{split}
\end{align}
where $\phi_2,\phi_3$ are some smooth functions satisfying
\begin{eqnarray}
\label{Lemma1ofSect32Eq3} \left.\omega_{12}(e_2)\right|_{\mathcal O_p}=\frac{\phi_2'}{\phi_2},&\qquad& 
\left.\omega_{13}(e_3)\right|_{\mathcal O_p}=\frac{\phi_3'}{\phi_3}.
\end{eqnarray}
\end{lemma}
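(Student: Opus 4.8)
The plan is to build the chart from the vanishing pattern of the connection forms recorded in \eqref{Sect3AraDenk1}, \eqref{CodazziclassA1} and \eqref{Sect3AraDenk2}, which leaves only $\omega_{12}(e_2)$ and $\omega_{13}(e_3)$ nonzero among the components $\omega_{ij}(e_k)$. First I would translate these into Lie brackets. Since $\nabla_{e_k}e_i=\sum_j\omega_{ij}(e_k)e_j$ and the connection is torsion free, a direct computation gives
\begin{equation*}
[e_1,e_2]=-\omega_{12}(e_2)\,e_2,\qquad [e_1,e_3]=-\omega_{13}(e_3)\,e_3,\qquad [e_2,e_3]=0.
\end{equation*}
The vanishing bracket $[e_2,e_3]=0$ together with the pointwise independence of $e_2,e_3$ lets me straighten them simultaneously on the integral leaf $L_0$ of $\mathrm{span}\{e_2,e_3\}$ through $p$, producing coordinates $(v,w)$ on $L_0$ with $e_2|_{L_0}=\partial_v$ and $e_3|_{L_0}=\partial_w$.

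Next I would extend off $L_0$ by the flow $\Phi_s$ of $e_1$: setting $\Psi(s,v,w)=\Phi_s(q(v,w))$, where $q(v,w)\in L_0$ carries the coordinates $(v,w)$, the map $\Psi$ is a local diffeomorphism near $p$ because its differential there sends $\partial_s,\partial_v,\partial_w$ to the independent vectors $e_1,e_2,e_3$, and by construction $\partial_s=e_1$. To locate $\partial_v$ and $\partial_w$ I would use that coordinate fields commute, so $[e_1,\partial_v]=0$. Writing $\partial_v=a\,e_1+b\,e_2+c\,e_3$ and feeding the bracket relations above into $[e_1,\partial_v]=0$ yields the decoupled system $\partial_s a=0$, $\partial_s b=b\,\omega_{12}(e_2)$, $\partial_s c=c\,\omega_{13}(e_3)$, with initial data $a=0,b=1,c=0$ on $\{s=0\}$. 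Uniqueness for these linear ODEs forces $a\equiv c\equiv 0$ and $b>0$, so $\partial_v=\phi_2\,e_2$ and, symmetrically, $\partial_w=\phi_3\,e_3$ for positive functions $\phi_2,\phi_3$. This already establishes the frame formulas \eqref{Lemma1ofSect32Eq1}, and it shows that the coordinate surfaces $\{s=\text{const}\}$ are exactly the leaves of $\mathrm{span}\{e_2,e_3\}$.

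The delicate point, and the step I expect to carry the real content, is showing that $\phi_2,\phi_3$ depend on $s$ alone, as the notation $\phi_2',\phi_3'$ in \eqref{Lemma1ofSect32Eq3} demands. The ODE $\partial_s b=b\,\omega_{12}(e_2)$ integrates to $b=\exp\!\big(\int_0^s \omega_{12}(e_2)\big)$, so I must check that the coefficient $\omega_{12}(e_2)$ is a function of $s$ only, equivalently that it is constant along each leaf. Here I would invoke the hypotheses of Remark \ref{RemarkSct32Rem1}: from \eqref{Sect3AraDenk1} one has $\omega_{1a}(e_a)=k_a\tan\theta$, and since $e_a(\theta)=0$ by \eqref{Sect3AraDenk1} while the functional relations $k_a=f_{a-1}(k_1)$ give $e_a(k_b)=0$, it follows that $e_2(\omega_{12}(e_2))=e_3(\omega_{12}(e_2))=0$, and likewise for $\omega_{13}(e_3)$. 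As the leaves coincide with $\{s=\text{const}\}$, this forces both coefficients to be functions of $s$, whence $\phi_2=\phi_2(s)$ and $\phi_3=\phi_3(s)$. Reading the two ODEs back then gives $\omega_{12}(e_2)=\phi_2'/\phi_2$ and $\omega_{13}(e_3)=\phi_3'/\phi_3$, which completes \eqref{Lemma1ofSect32Eq3}.
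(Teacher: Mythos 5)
Your proposal is correct and follows essentially the same route as the paper: both derive the bracket relations $[e_1,e_2]=-\omega_{12}(e_2)e_2$, $[e_1,e_3]=-\omega_{13}(e_3)e_3$, $[e_2,e_3]=0$, both use the functional relations $k_a=f_{a-1}(k_1)$ together with $e_a(\theta)=e_a(k_1)=0$ to show $e_a(\omega_{12}(e_2))=e_a(\omega_{13}(e_3))=0$, and both conclude by straightening a commuting frame. The paper simply rescales to $e_1,\,\phi_2 e_2,\,\phi_3 e_3$ with $\phi_a=e^{\zeta_a}$ and cites the simultaneous straightening of commuting fields, whereas you construct the chart explicitly by flowing out of a leaf and solving the resulting linear ODEs; this is the same argument in a different order of execution.
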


\begin{proof}
By using \eqref{ConhatclassAeqN2}, \eqref{ConhatclassAeqN3} and \eqref{ConhatclassAeqN5}, we obtain
\begin{align}\label{Lemma1ofSect32Eq1AraDenk0}
\begin{split}
    &[e_1,e_2] =  - \omega_{12}(e_2)e_2, \\
    &[e_1,e_3] =  -\omega_{13}(e_3)e_3, \\
    &[e_2,e_3] =  0.
    \end{split}
\end{align}

On the other hand, because of \eqref{Sect3AraDenk1}, \eqref{Sect3AraDenk2} and Remark \ref{RemarkSct32Rem1}, we have 
\begin{align*}
   &e_a(\omega_{12}(e_2)) = e_a(k_2 \tan \theta) = \tan \theta e_a(k_2) = \tan \theta e_a(f_1(k_1)) = 0, \\
   &e_a(\omega_{13}(e_3)) = e_a(k_3 \tan \theta) = \tan \theta e_a(k_3) = \tan \theta e_a(f_2(k_1)) = 0.
\end{align*}
Let $\phi_2$ and $\phi_3$ be functions defined by
\begin{align}
\label{phi_defn}
    \phi_2 = e^{\zeta_2}, \quad    \phi_3 = e^{\zeta_3},
\end{align}
where $\zeta_2$ and $\zeta_3$ are smooth functions satisfying $e_1(\zeta_a) = \omega_{1a}(e_a)$ and $e_2(\zeta_a) =e_3(\zeta_a)=0$. From \eqref{phi_defn}, $\phi_2$ and $\phi_3$ satisfy
\begin{equation}\label{Lemma1ofSect32Eq1AraDenk1}
e_1(\phi_a)-\phi_a\omega_{1a}(e_a)=e_2(\phi_a)=e_3(\phi_a)=0.
\end{equation}
By combining \eqref{Lemma1ofSect32Eq1AraDenk0} and \eqref{Lemma1ofSect32Eq1AraDenk1} we obtain 
$$[X,Y] = [X, Z]= [Y,Z]=0,$$
where $X,Y,Z$ are vector fields defined by $X= e_1, \; Y=\phi_2 e_2, \; Z=\phi_3 e_3$. Consequently, on a neighborhood $\mathcal O_p$ of $p$, there exists a local coordinate system $(s,v,w)$ such that $X=\frac{\partial}{\partial s}$, $Y=\frac{\partial}{\partial v}$ and $Z = \frac{\partial}{\partial w}$. \eqref{Sect3AraDenk1} and \eqref{Lemma1ofSect32Eq1AraDenk1}  imply $\theta=\theta(s)$ and $\phi_a=\phi_a(s)$ on $\mathcal O_p$, respectively. Hence,  we have \eqref{Lemma1ofSect32Eq1}. On the other hand, \eqref{Lemma1ofSect32Eq1AraDenk1} implies \eqref{Lemma1ofSect32Eq3}.
\end{proof}

Next, by considering Lemma \ref{Lemma1ofSect32}, we obtain a local parametrization of $\mathcal O_p$. 
\begin{lemma}\label{Lemma2ofSect32}
Let $M$ be a hypersurface of $\mathbb{Q}^3_\epsilon\times\mathbb{R}$ satisfying the assumptions given in Remark \ref{RemarkSct32Rem1}, $p\in M$ and $(\mathcal O_p,(s,v,w))$ be local frame field constructed in Lemma \ref{Lemma1ofSect32}. Then, $\mathcal O_p$ can be parametrized as
\begin{align}
\label{positionvec1}
	{ x}(s,v,w)  = { y}(s,v) + { z}(s,w),
\end{align}
where ${ y}(s,v)$ and ${ z}(s,w)$ are smooth vector-valued functions that satisfy
\begin{align}
\label{connewe1_1}
    &{ y}_{v}(s,v) = \phi_2(s)\gamma(v)\\
\label{conneweq2_2}
    &{ z}_{w}(s,w) = \phi_3(s)\beta(w)
\end{align}
for some smooth functions $\gamma(v)$, $\beta(w)$ such that
\begin{align}
\label{eqgamma}
\gamma''(v) + \left(\frac{1}{\sin^2 \theta}(\phi_{2}')^2 + \epsilon (\phi_2)^2\right) \gamma(v)&=0,\\ 
\label{eqbeta}
 \beta''(w) + \left(\frac{1}{\sin^2 \theta}(\phi_{3}')^2 + \epsilon (\phi_3)^2\right) \beta(w)&=0.
\end{align}
\end{lemma}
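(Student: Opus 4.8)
The plan is to construct the parametrization from the flat-connection data encoded in \eqref{ConhatclassAeqNALL}. Working in the ambient flat space $\mathbb E^5$ or $\mathbb L^5$ with the connection $\widehat\nabla$, the key observation is that \eqref{ConhatclassAeqN2} and \eqref{ConhatclassAeqN2b} say $\widehat\nabla_{e_1}e_2=\widehat\nabla_{e_1}e_3=\widehat\nabla_{e_2}e_3=\widehat\nabla_{e_3}e_2=0$; in the coordinates of Lemma \ref{Lemma1ofSect32}, where $e_1=\partial_s$, $e_2=\phi_2^{-1}\partial_v$, $e_3=\phi_3^{-1}\partial_w$, these translate into statements about the ambient second partial derivatives of the position vector $x$. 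First I would compute $x_v=\phi_2 e_2$ and $x_w=\phi_3 e_3$, and then show that the mixed partial $x_{vw}$ vanishes identically. Indeed $\widehat\nabla_{e_2}e_3=0$ gives $\widehat\nabla_{x_v}(x_w/\phi_3)=0$, so $x_{vw}=\widehat\nabla_{\partial_v}x_w$ has no component except possibly along $\partial_s$; combining with $\widehat\nabla_{e_3}e_2=0$ and the fact that $\phi_2,\phi_3$ depend only on $s$ should force $x_{vw}=0$.

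Once $x_{vw}\equiv 0$, integration yields the splitting $x(s,v,w)=y(s,v)+z(s,w)$ claimed in \eqref{positionvec1}, where I can absorb the pure $s$-dependence into either summand (say into $y$). This gives $y_v=x_v=\phi_2 e_2$ and $z_w=x_w=\phi_3 e_3$. To obtain \eqref{connewe1_1} and \eqref{conneweq2_2} I must verify that $y_v/\phi_2=e_2$ depends only on $v$ and $z_w/\phi_3=e_3$ depends only on $w$; this is exactly what $\widehat\nabla_{e_1}e_2=0$ and $\widehat\nabla_{e_1}e_3=0$ provide, since $e_1=\partial_s$ and so $\partial_s e_2=\widehat\nabla_{\partial_s}e_2=\widehat\nabla_{e_1}e_2=0$, giving $e_2=\gamma(v)$ and likewise $e_3=\beta(w)$. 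Hence $y_v=\phi_2(s)\gamma(v)$ and $z_w=\phi_3(s)\beta(w)$ as required.

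It remains to derive the ODEs \eqref{eqgamma} and \eqref{eqbeta} for $\gamma$ and $\beta$. For this I would differentiate $y_v=\phi_2\gamma$ once more in $v$, so that $\gamma''=\partial_v e_2=\widehat\nabla_{\partial_v}e_2=\phi_2\widehat\nabla_{e_2}e_2$, and then substitute the explicit expression for $\widehat\nabla_{e_2}e_2$ from \eqref{ConhatclassAeqN4}. That formula contributes the $e_1$, $N$, and $e_5=e_{n+2}$ terms; re-expressing $e_1$, $N$, and $e_5$ back in terms of the position data and using $\omega_{12}(e_2)=\phi_2'/\phi_2$ from \eqref{Lemma1ofSect32Eq3} together with $\cot\theta\,\omega_{12}(e_2)$ should collapse everything into a multiple of $\gamma(v)$, producing the coefficient $\frac{1}{\sin^2\theta}(\phi_2')^2+\epsilon(\phi_2)^2$. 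The analogous computation with \eqref{ConhatclassAeqN6} yields \eqref{eqbeta}.

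I expect the main obstacle to be the bookkeeping in this last step: tracking the $e_1$, $N$, and $e_5$ components through the ambient relation \eqref{LeviCivitaproduct} and verifying that they recombine into a pure multiple of $\gamma$ (respectively $\beta$), so that the expression genuinely becomes a second-order linear ODE in $v$ (respectively $w$) alone. In particular one must check that the coefficient depends only on $s$ and not on $v$ or $w$, which is where the conditions $e_a(\theta)=0$ and $e_a(\phi_b)=0$ established in Lemma \ref{Lemma1ofSect32} are essential; these guarantee that $\phi_2'/\phi_2$, $\theta$, and hence the bracketed coefficient are functions of $s$ only, so that \eqref{eqgamma} is a consistent ODE in $v$.
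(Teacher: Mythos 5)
Your proposal follows essentially the same route as the paper: deduce $x_{vw}=0$ from $\widehat\nabla_{e_2}e_3=0$ to split $x=y(s,v)+z(s,w)$, integrate $\widehat\nabla_{e_1}e_2=\widehat\nabla_{e_1}e_3=0$ to get $y_v=\phi_2\gamma$ and $z_w=\phi_3\beta$, and then differentiate the expressions \eqref{ConhatclassAeqN4} and \eqref{ConhatclassAeqN6} along $e_2$ and $e_3$ to produce the ODEs. The only slip is a derivative count: $\partial_v e_2=\phi_2\widehat\nabla_{e_2}e_2$ equals $\gamma'$, not $\gamma''$, so the second-order equation requires one further $v$-differentiation of \eqref{ConhatclassAeqN4} (precisely the step the paper performs to reach $y_{vvv}+(\cdots)y_v=0$), after which the $e_1$, $N$, $e_5$ terms do collapse to a multiple of $\gamma$ as you anticipate.
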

\begin{proof}
Let $x$ be the position vector of $M$ on the flat ambient space. Then, by a straightforward computation using \eqref{ConhatclassAeqN2b}, we obtain
$x_{wv} = 0$ which yields \eqref{positionvec1} for some smooth vector-valued functions $y = y(s,v)$ and $z = z(s,w)$.
Furthermore, by combining \eqref{ConhatclassAeqN2}, \eqref{ConhatclassAeqN4} and \eqref{ConhatclassAeqN6} with \eqref{positionvec1}, we obtain
\begin{eqnarray}
\label{connewe1}{ y}_{vs} &=& \frac{\phi_2'}{\phi_2} { y}_v,\\
\label{conneweq2}{ z}_{ws} &=& \frac{\phi_3'}{\phi_3} { z}_w,\\
\label{conneweq3}{y}_{vv} &=& {\phi_2^2 } (-\omega_{12}(e_2)e_1 + \cot \theta \omega_{12}(e_2)N - \epsilon e_5),\\
\label{conneweq4}{ z}_{ww} &=& {\phi_3^2}(-\omega_{13}(e_3)e_1 + \cot \theta \omega_{13}(e_3)N - \epsilon e_5).
\end{eqnarray}
Note that \eqref{connewe1} and \eqref{conneweq2} imply \eqref{connewe1_1} and \eqref{conneweq2_2}, respectively, for some smooth functions $\gamma$ and $\beta$.

On the other hand, by taking the covariant derivative of both sides of \eqref{conneweq3} along $e_2$, and of \eqref{conneweq4} along $e_3$, and by using \eqref{ConhatclassAeqNALL}, we obtain
\begin{eqnarray}
\label{eqy}
y_{vvv} + \left(\frac{1}{\sin^2 \theta}(\phi'_{2})^2 + \epsilon (\phi_2)^2\right)y_v &=& 0, \\
\label{eqz}
z_{www} + \left(\frac{1}{\sin^2 \theta}(\phi'_{3})^2 + \epsilon (\phi_3)^2\right)z_w &=& 0,
\end{eqnarray}
respectively.

Finally, by combining \eqref{connewe1_1} with \eqref{eqy}, we obtain \eqref{eqgamma}, whereas \eqref{eqz}, together with \eqref{conneweq2_2}, implies \eqref{eqbeta}.
\end{proof}

\begin{rem}\label{C_D_const} 
\eqref{eqy} and \eqref{eqz} imply that there exists some constants $c,d\in\mathbb R$ such that
   the functions 
\begin{eqnarray}
\label{C_Constant}    \left(\frac{1}{\sin^2 \theta}(\phi'_{2})^2 + \epsilon (\phi_2)^2\right) &=& c, \\
\label{D_Constant}    \left(\frac{1}{\sin^2 \theta}(\phi'_{3})^2 + \epsilon (\phi_3)^2\right) &=& d.
\end{eqnarray}
\end{rem}

\textbf{The case $\epsilon=1$. }Now, assume that $M$ is a hypersurface of $\mathbb{S}^3\times \mathbb{R}$. Then, the constants $c$ and $d$ appearing in \eqref{C_Constant} and \eqref{D_Constant} are positive. So, we put $c=A^2$ and $d=B^2$. Then, by solving \eqref{eqgamma} and \eqref{eqbeta}, we obtain
$$\gamma(v) = \cos(Av) C_1+ \sin(Av)C_2$$ and $$\beta(w) =\cos(Bw)  D_1+ \sin(Bw)D_2,$$ respectively,  
where $C_1$, $C_2$, $D_1$, $D_2$ are constant vectors in  $\mathbb{E}^5$. Consequently, \eqref{connewe1_1} and \eqref{conneweq2_2} give
\begin{align}\label{eqy1}
\begin{split}
 y(s,v) =& \frac{\phi_2(s)}{A}\sin{(Av)}C_1 - \frac{\phi_2(s)}{A}\cos{(Av)}C_2 + C_3(s), \\
 z(s,w) =& \frac{\phi_3(s)}{B}\sin{(Bw)}D_1 - \frac{\phi_3(s)}{B}\cos{(Bw)} D_2+ D_3(s), 
\end{split}
\end{align}
where $C_3(s)$ and $D_3(s)$ are some smooth $\mathbb{E}^5$-valued functions. 

Since $\{e_1, e_2, e_3\}$ is an orthonormal tangent frame field of $M$, by a direct computation using \eqref{Lemma1ofSect32Eq1} and \eqref{eqy1}, we observe that $\{C_1,C_2,D_1,D_2,\frac{\partial}{\partial x_5}\}$ is an orthonormal basis for $\mathbb{E}^5$. 
Therefore, up to a linear isometry of $\mathbb{S}^3\times \mathbb{R}$, we assume that
\begin{equation}\label{c1c2c3c4epsilon1}
C_1=\frac{\partial}{\partial x_2},\ C_2=-\frac{\partial}{\partial x_1},\ D_1=\frac{\partial}{\partial x_4},\ D_2=-\frac{\partial}{\partial x_3}
\end{equation}
In this case, by combining \eqref{positionvec1} with \eqref{eqy1} and \eqref{c1c2c3c4epsilon1}, we get 
\begin{align}\label{positionvec2S31}
    { x}(s,v,w) &=\left(\frac{\phi_2(s)}{A}\cos{(Av)},\frac{\phi_2(s)}{A}\sin{(Av)},\frac{\phi_3(s)}{B}\cos{(Bw)},
    \frac{\phi_3(s)}{B}\sin{(Bw)},0\right) \\ \nonumber
    &+\Gamma(s),
\end {align}
where we put $\Gamma = C_3 + D_3=\left(\Gamma_1,\Gamma_2,\Gamma_3,\Gamma_4,\Gamma_5\right)$. Moreover, since $M$ lays on $\mathbb{S}^3\times\mathbb{R}$, we have $\Gamma_1 = \Gamma_2 = \Gamma_3 = \Gamma_4 = 0$ and 
\begin{align}\label{Be4classS3xRthreedistinctEq1}
    \frac{\phi_2^2(s)}{A^2}+\frac{\phi_3^2(s)}{B^2}=1
\end{align}
and $\langle e_1,e_1 \rangle = 1$ implies
\begin{align}\label{Be4classS3xRthreedistinctEq2}
\frac{\left(\phi'{}_2(s)\right)^2}{A^2} + \frac{\left(\phi'{}_3(s)\right)^2}{B^2} + \left(\Gamma^{\prime}_5(s)\right)^2  = 1. 
\end{align}
Consequently, \eqref{positionvec2S31} turns into 
\begin{align*}
    { x}(s,v,w) = \left(\frac{\phi_2(s)}{A}\cos{(Av)},\frac{\phi_2(s)}{A}\sin{(Av)},\frac{\phi_3(s)}{B}\cos{(Bw)},\frac{\phi_3(s)}{B}\sin{(Bw)},\Gamma_5(s)\right).
\end{align*}
Next, by considering \eqref{Be4classS3xRthreedistinctEq1} we define a function $\alpha_1$ by
$$\frac{\phi_2(s)}{A}=\cos(\alpha_1(s))\mbox{ and }\frac{\phi{}_3(s)}{B}=\sin(\alpha_1(s))$$
and put $\Gamma_5=\alpha_2$. After a suitable scaling on the parameters $v$ and $w$, 
we observe that $\mathcal O_p$ can be parametrized as given in \eqref{positionexs3xR}. Furthermore, \eqref{Be4classS3xRthreedistinctEq2} turns into \eqref{positionexs3xREq2}. Hence, we have the following local classification theorem for hypersurfaces of $\mathbb{S}^3\times \mathbb{R}$.
\begin{theorem}
\label{classS3xRthreedistinct}
Let ${ x}:M\rightarrow \mathbb{S}^3\times \mathbb{R}\subset\mathbb{E}^5$ be a hypersurface
with three distinct principle curvatures $k_1$, $k_2$ and $k_3$ 
satisfying $ST=k_1 T$.
Assume that there are functional relations $k_2=f_1(k_1)$ and $k_3 = f_2(k_1)$ for some functions $f_1$ and $f_2$.
Then, $M$ is locally congruent to the hypersurface given in Example \ref{exS3xR}. 
\end{theorem}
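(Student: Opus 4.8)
The plan is to observe first that the hypotheses of the theorem are exactly those collected in Remark \ref{RemarkSct32Rem1}. Indeed, $ST = k_1 T$ says that $T$—equivalently the vector field $e_1$ fixed in \eqref{RWTS-etaandTDefNew}—is a principal direction with principal curvature $k_1$; the three principal curvatures are assumed pairwise distinct, so $k_i \neq k_j$ for $i \neq j$; and the functional relations $k_2 = f_1(k_1)$, $k_3 = f_2(k_1)$ are imposed by hypothesis. Consequently every structural consequence derived above is at our disposal: the frame identities \eqref{Sect3AraDenk1}, the Codazzi-type relations \eqref{CodazziclassA1}--\eqref{CodazziclassA6}, the vanishing \eqref{Sect3AraDenk2}, and the connection formulae \eqref{ConhatclassAeqNALL}. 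In particular, the two lemmas apply verbatim.

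Next I would invoke Lemma \ref{Lemma1ofSect32} to obtain, around an arbitrary $p \in M$, a local coordinate system $(s,v,w)$ in which $e_1 = \partial_s$, $e_2 = \phi_2^{-1}\partial_v$, $e_3 = \phi_3^{-1}\partial_w$, and then Lemma \ref{Lemma2ofSect32} to get the split parametrization $x = y(s,v) + z(s,w)$ together with the third-order equations \eqref{eqy}--\eqref{eqz} for $y_v$ and $z_w$. Remark \ref{C_D_const} reduces their coefficients to constants $c,d$; since here $\epsilon = 1$, the expressions in \eqref{C_Constant}--\eqref{D_Constant} are sums of squares and hence strictly positive, so I may write $c = A^2$, $d = B^2$. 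Solving \eqref{eqgamma}--\eqref{eqbeta} then produces $\gamma,\beta$ as trigonometric vector-valued functions, and a single integration in $v$ and $w$ through \eqref{connewe1_1}--\eqref{conneweq2_2} yields the explicit forms \eqref{eqy1} for $y$ and $z$.

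The remaining step is to read off the ambient geometry. Differentiating the explicit parametrization and imposing that $\{e_1,e_2,e_3\}$ be orthonormal forces $\{C_1,C_2,D_1,D_2,\partial_{x_5}\}$ to be an orthonormal basis of $\mathbb{E}^5$, so after a linear isometry of $\mathbb{S}^3\times\mathbb{R}$ these may be normalized as in \eqref{c1c2c3c4epsilon1}. Substituting back through \eqref{positionvec1} gives \eqref{positionvec2S31}, and then the constraint $M \subset \mathbb{S}^3 \times \mathbb{R}$ together with $\langle e_1,e_1\rangle = 1$ yields \eqref{Be4classS3xRthreedistinctEq1} and \eqref{Be4classS3xRthreedistinctEq2}. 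Defining $\alpha_1$ by $\phi_2/A = \cos\alpha_1$ and $\phi_3/B = \sin\alpha_1$, setting $\alpha_2 = \Gamma_5$, and rescaling $v,w$, I recover precisely the parametrization \eqref{positionexs3xR} of Example \ref{exS3xR}, which gives the asserted local congruence.

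I expect the delicate points to be of a bookkeeping rather than conceptual nature, since the substance has been front-loaded into Lemmas \ref{Lemma1ofSect32} and \ref{Lemma2ofSect32}. The two places requiring care are the strict positivity of $c$ and $d$—this is exactly where $\epsilon = 1$ enters, guaranteeing oscillatory rather than exponential solutions of the structure equations—and the verification that the orthonormality of the moving frame pins down the constant vectors $C_1,C_2,D_1,D_2$ completely, up to the stated isometry, so that no spurious parameters survive. Once these are checked, the proof is an assembly of the two lemmas followed by the explicit integration carried out for the sphere case.
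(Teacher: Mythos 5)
Your proposal is correct and follows essentially the same route as the paper: it assembles Lemmas \ref{Lemma1ofSect32} and \ref{Lemma2ofSect32} with Remark \ref{C_D_const}, uses $\epsilon=1$ to write $c=A^2$, $d=B^2$ (strict positivity holding since $\phi_a=e^{\zeta_a}>0$), integrates to get \eqref{eqy1}, normalizes the constant vectors via orthonormality of the frame, and imposes the ambient constraints to recover Example \ref{exS3xR}. No substantive differences from the paper's argument.
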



\textbf{The case $\epsilon=-1$. }Now, assume that $M$ is a hypersurface of $\mathbb{H}^3\times \mathbb{R}$. First, we get the following lemma:
\begin{lemma}\label{Lemmaeps-1constscd}
    Suppose that $M$ is a hypersurface of $\mathbb{H}^3\times \mathbb{R}$, that is, $\epsilon = -1$. Then, the constants $c$ and $d$ appearing in \eqref{C_Constant} and \eqref{D_Constant} have opposite signatures.
\end{lemma}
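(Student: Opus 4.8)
The plan is to solve the two ordinary differential equations \eqref{eqgamma} and \eqref{eqbeta}, whose coefficients are the constants $c$ and $d$ of Remark \ref{C_D_const} (recall that here $\epsilon=-1$, so $c=\frac{1}{\sin^2\theta}(\phi_2')^2-\phi_2^2$ and $d=\frac{1}{\sin^2\theta}(\phi_3')^2-\phi_3^2$), while keeping the sign of each constant undetermined. In each regime the fundamental system is trigonometric (if the constant is positive), affine (if it vanishes), or hyperbolic (if it is negative), so that $\gamma=\gamma_1 C_1+\gamma_2 C_2$ and $\beta=\beta_1 D_1+\beta_2 D_2$ for suitable constant vectors $C_1,C_2,D_1,D_2\in\mathbb{L}^5$ and explicit scalar functions $\gamma_i(v),\beta_j(w)$. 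Unlike the case $\epsilon=1$, where positivity of $c$ and $d$ was immediate, here I cannot read off the signs from the defining expressions, so the signs must be forced a posteriori.

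Next I would convert the orthonormality of the frame into algebraic conditions on these constant vectors. Since $x_v=\phi_2\gamma$ and $x_w=\phi_3\beta$, equation \eqref{Lemma1ofSect32Eq1} gives $e_2=\gamma(v)$ and $e_3=\beta(w)$; hence $\langle\gamma,\gamma\rangle=\langle\beta,\beta\rangle=1$ and $\langle\gamma,\beta\rangle=0$ identically in $v$ and $w$. Expanding these identities and matching the linearly independent scalar functions pins down the Gram matrices of $\{C_1,C_2\}$ and $\{D_1,D_2\}$ and yields $\langle C_i,D_j\rangle=0$. The outcome is a causal--character dictionary: a positive constant makes both associated vectors spacelike and unit, a negative constant makes one spacelike unit and the other timelike unit, and a vanishing constant forces one of the vectors to be lightlike. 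Moreover, by \eqref{RWTS-etaandTDefNew} the parallel field $\frac{\partial}{\partial x_5}=\cos\theta\,e_1+\sin\theta\,N$ is a spacelike unit vector orthogonal to $e_2$ and $e_3$, hence orthogonal to all of $C_1,C_2,D_1,D_2$.

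Finally I would run an index count. Mirroring the argument used for $\mathbb{S}^3\times\mathbb{R}$, the vectors $\{C_1,C_2,D_1,D_2,\frac{\partial}{\partial x_5}\}$ form a basis of the ambient space; being mutually orthogonal, they constitute a pseudo-orthonormal basis of $\mathbb{L}^5$, whose index equals $1$. Because $\frac{\partial}{\partial x_5}$ is spacelike, exactly one of $C_1,C_2,D_1,D_2$ may be timelike and none may be lightlike. Reading this through the dictionary above, the cases $c,d>0$ and $c,d<0$ yield index $0$ and $2$ respectively and are excluded, while $c=0$ or $d=0$ would place a null vector orthogonal to the entire basis, contradicting the non-degeneracy of $\mathbb{L}^5$. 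Therefore exactly one of $c,d$ is negative and the other positive, i.e.\ they have opposite signatures.

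The main obstacle I anticipate is the Lorentzian bookkeeping in the step asserting that $\{C_1,C_2,D_1,D_2,\frac{\partial}{\partial x_5}\}$ is a genuine non-degenerate basis of $\mathbb{L}^5$: this is the analogue of the orthonormal-basis computation carried out for $\mathbb{S}^3\times\mathbb{R}$, but now one must track causal characters carefully through the hyperbolic solution forms and explicitly rule out the degenerate subcases $c=0$ and $d=0$, rather than take positivity for granted as in the spherical case.
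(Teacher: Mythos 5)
Your argument is correct, but it follows a genuinely different route from the paper's. The paper does not solve the ODEs \eqref{eqgamma}--\eqref{eqbeta} at this stage: it applies the Gauss equation \eqref{Gausseq} with $X=Z=e_2$, $Y=W=e_3$ to obtain $\omega_{12}(e_2)\omega_{13}(e_3)+k_2k_3=1$, hence $k_2k_3=\cos^2\theta$; then, using \eqref{Sect3AraDenk1} and \eqref{Lemma1ofSect32Eq3}, it rewrites $c=\phi_2^2(k_2^2\sec^2\theta-1)$ and $d=\phi_3^2(k_3^2\sec^2\theta-1)$ and combines these into the identity $d=-\frac{\phi_3^2\cos^2\theta}{\phi_2^2k_2^2}\,c$, the case $c=d=0$ being excluded because it would force $k_2=k_3$. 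Your route instead integrates the ODEs in all three sign regimes, reads off the Gram matrices of $C_1,C_2,D_1,D_2$ from $\langle e_2,e_2\rangle=\langle e_3,e_3\rangle=1$, $\langle e_2,e_3\rangle=0$ and $\langle e_a,\tfrac{\partial}{\partial x_5}\rangle=0$, and then invokes the signature of $\mathbb{L}^5$. Both are sound: the paper's argument is shorter, stays at the level of the structure equations, and is what allows the authors to set $c=-A^2$, $d=B^2$ \emph{before} integrating, whereas yours explains conceptually why the spherical case forces $c,d>0$ while the hyperbolic case forces a split, and it treats the degenerate subcases $c=0$, $d=0$ uniformly. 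Two points in your sketch need to be made explicit to close it: (1) the spanning of $\{C_1,C_2,D_1,D_2,\tfrac{\partial}{\partial x_5}\}$ holds because $\mathrm{span}(C_1,C_2)=\mathrm{span}(e_2,\widehat{\nabla}_{e_2}e_2)$ and $\mathrm{span}(D_1,D_2)=\mathrm{span}(e_3,\widehat{\nabla}_{e_3}e_3)$, and by \eqref{ConhatclassAeqN4}, \eqref{ConhatclassAeqN6} and \eqref{RWTS-etaandTDefNew} the three vectors $\widehat{\nabla}_{e_2}e_2$, $\widehat{\nabla}_{e_3}e_3$, $\tfrac{\partial}{\partial x_5}$ lying in $\mathrm{span}(e_1,N,e_5)$ have determinant proportional to $\omega_{12}(e_2)-\omega_{13}(e_3)\neq 0$; (2) in the regime $c=0$ one must also rule out $C_2=0$ (otherwise there is no null basis vector to place in the radical), which follows since the $e_5$-component of $\widehat{\nabla}_{e_2}e_2=\phi_2^{-1}\gamma'$ equals $-\epsilon\neq 0$.
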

\begin{proof}
By using \eqref{ConhatclassAeqN2b}, \eqref{ConhatclassAeqN4}, \eqref{ConhatclassAeqN5} and \eqref{Lemma1ofSect32Eq1AraDenk0}, we observe that the Gauss equation \eqref{Gausseq} with $ X = e_2 $, $ Y = e_3 $, and $ Z = e_2 $, gives
$$\omega_{12}(e_2)\omega_{13}(e_3) + k_2k_3 = 1,$$  
which, together with \eqref{Sect3AraDenk1}, yields  
\begin{equation} \label{classH3xRthreedistinctDenk1}
k_2k_3 = \cos^2\theta.
\end{equation}
On the other hand, \eqref{Sect3AraDenk1}, \eqref{Lemma1ofSect32Eq3}, \eqref{C_Constant} and \eqref{D_Constant} imply
\begin{equation}\label{classH3xRthreedistinctDenk2}
c=(\phi_2(s))^2 (k_2^2\sec^2\theta -1) ,\qquad d=(\phi_3(s))^2 (k_3^2\sec^2\theta -1)
\end{equation}
By a direct computation using \eqref{classH3xRthreedistinctDenk1} and \eqref{classH3xRthreedistinctDenk2}, we get
\begin{equation}\label{classH3xRthreedistinctDenk3}
-\frac{\phi_3^2\cos^2\theta}{\phi_2^2k_2^2}c=d.
\end{equation}
Note that if $c=d=0$ then, \eqref{classH3xRthreedistinctDenk1} and \eqref{classH3xRthreedistinctDenk2} imply
$$k_2=k_3=\cos\theta$$
which is a contradiction because of the assumption $k_2 \neq k_3$. Therefore, \eqref{classH3xRthreedistinctDenk3} yields that $c$ and $d$ have opposite signatures. 
\end{proof}

As a consequence of Lemma \ref{Lemmaeps-1constscd}, without loss of generality, we put $c=-A^2$ and $d=B^2$ for some positive $A,B\in\mathbb R$. Therefore, similar to the case $\epsilon=1$, from \eqref{eqgamma} and \eqref{eqbeta} we obtain that 
\begin{align}\label{eqy2}
\begin{split}
 y(s,v) =& \frac{\phi_2(s)}{A}\sinh{(Av)}C_1 + \frac{\phi_2(s)}{A}\cosh{(Av)}C_2 + C_3(s), \\
 z(s,w) =& \frac{\phi_3(s)}{B}\sin{(Bw)}D_1 - \frac{\phi_3(s)}{B}\cos{(Bw)} D_2+ D_3(s), 
\end{split}
\end{align}
 for some constant vectors $C_1,C_2,D_1,D_2\in\mathbb{E}^5_1$, where $C_3(s)$ and $D_3(s)$ are some smooth $\mathbb{E}^5_1$-valued functions.

Moreover, since $\{e_1, e_2, e_3\}$ is an orthonormal tangent frame field of $M$, by a direct computation using \eqref{Lemma1ofSect32Eq1} and \eqref{eqy2}, we observe that $\{C_2,C_1,D_1,D_2,\frac{\partial}{\partial t}\}$ is an orthonormal basis for $\mathbb{E}^5_1$ such that $C_2$ is time-like. So, up to linear isometry, one can assume $C_1, C_2, D_1$ and $D_2$ as
\begin{equation*}
C_2=\frac{\partial}{\partial x_1},\ C_1=\frac{\partial}{\partial x_2},\ D_1=\frac{\partial}{\partial x_4},\ D_2=-\frac{\partial}{\partial x_3}.
\end{equation*}
Hence, the position vector field ${ x}$ of $M$ can be expressed as 
\begin{align}
    { x}(s,v,w) &=\left(\frac{\phi_2(s)}{A}\cosh{(Av)},\frac{\phi_2(s)}{A}\sinh{(Av)},
    \frac{\phi_3(s)}{B} \cos{(Bw)}, \frac{\phi_3(s)}{B} \sin{(Bw)},0 \right) \\ \nonumber
    &+\Gamma(s),
\end {align}
where we put $\Gamma = C_3 + D_3=\left(\Gamma_1,\Gamma_2,\Gamma_3,\Gamma_4,\Gamma_5\right)$.

By a direct computation similar to the case $\epsilon=1$, we observe that $\mathcal O_p$ can be parametrized as given in \eqref{positionexH3xR} for some smooth functions $\alpha_1,\alpha_2$ satisfying \eqref{positionexH3xREq2}. Hence, we get the following theorem.
\begin{theorem}
\label{classH3xRthreedistinct}
Let $x:M\rightarrow \mathbb{H}^3\times \mathbb{R}$ be a class $\mathcal{A}$ hypersurface with three distinct principle curvatures $k_1$, $k_2$ and $k_3$ satisfying $ST=k_1 T$. Assume that there are functional relations with functional relations $k_2=f_1(k_1)$ and $k_3 = f_2(k_1)$ for some functions $f_1$ and $f_2$. Then, it is an open part of the hypersurface given in Example \ref{exH3xR}. 
\end{theorem}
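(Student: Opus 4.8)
The plan is to run the same integration scheme used for $\mathbb{S}^3\times\mathbb{R}$, but to feed in the sign information supplied by Lemma \ref{Lemmaeps-1constscd} at the decisive moment. Since the hypotheses of the theorem are exactly those of Remark \ref{RemarkSct32Rem1} with $\epsilon=-1$, all the structural consequences derived there are available: the Codazzi relations \eqref{CodazziclassA}, the ambient connection formulae \eqref{ConhatclassAeqNALL}, and in particular Lemmas \ref{Lemma1ofSect32} and \ref{Lemma2ofSect32}. Fixing $p\in M$, I would work on the coordinate neighborhood $\mathcal O_p$ and record the local parametrization \eqref{positionvec1} together with the decoupled ordinary differential equations \eqref{eqgamma} and \eqref{eqbeta}.

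First I would invoke Remark \ref{C_D_const} to replace the two bracketed expressions by constants $c,d$, and then apply Lemma \ref{Lemmaeps-1constscd}: because $\epsilon=-1$, the constants $c$ and $d$ must have opposite signs. After relabeling if necessary, set $c=-A^2$ and $d=B^2$ with $A,B>0$. The sign of $c$ now dictates the qualitative form of the solutions: \eqref{eqgamma} becomes $\gamma''-A^2\gamma=0$, whose solutions are hyperbolic, while \eqref{eqbeta} becomes $\beta''+B^2\beta=0$, whose solutions are trigonometric. Integrating \eqref{connewe1_1} and \eqref{conneweq2_2} then yields $y(s,v)$ and $z(s,w)$ in the mixed form \eqref{eqy2}, with constant vectors $C_1,C_2,D_1,D_2$ and smooth remainders $C_3(s),D_3(s)$.

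The next step is the signature bookkeeping, which I expect to be the genuinely new ingredient beyond the spherical case. Imposing that $\{e_1,e_2,e_3\}$ is orthonormal for the Lorentzian metric of $\mathbb{E}^5_1$ and reading off the inner products from \eqref{Lemma1ofSect32Eq1} and \eqref{eqy2}, I would show that $\{C_2,C_1,D_1,D_2,\frac{\partial}{\partial t}\}$ is an orthonormal basis of $\mathbb{E}^5_1$ in which $C_2$ is the unique time-like vector. The crucial point is that the time-like direction is forced onto the hyperbolic pair coming from $c<0$; this is precisely what Lemma \ref{Lemmaeps-1constscd} guarantees, and it is what makes the resulting surface sit inside $\mathbb{H}^3\times\mathbb{R}$ rather than some other quadric. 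Using a linear isometry of $\mathbb{E}^5_1$ preserving $\mathbb{H}^3\times\mathbb{R}$, I would then normalize $C_2,C_1,D_1,D_2$ to the standard coordinate vectors.

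Finally, substituting the normalized vectors into \eqref{positionvec1} gives an explicit expression for $x(s,v,w)$, and imposing $x(\mathcal O_p)\subset\mathbb{H}^3\times\mathbb{R}$, i.e. $-x_1^2+x_2^2+x_3^2+x_4^2=-1$, collapses the remainder terms and produces the relation $\phi_2^2/A^2-\phi_3^2/B^2=1$. Defining $\alpha_1$ through $\cosh\alpha_1=\phi_2/A$ and $\sinh\alpha_1=\phi_3/B$, setting $\alpha_2=\Gamma_5$, and rescaling $v$ and $w$, I would match the parametrization to \eqref{positionexH3xR}, while the condition $\langle e_1,e_1\rangle=1$ reproduces the unit-speed relation \eqref{positionexH3xREq2}. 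This identifies $\mathcal O_p$ with an open part of Example \ref{exH3xR}, and since $p$ was arbitrary the theorem follows. The only step demanding real care is the Lorentzian normalization in the third paragraph; everything else is a sign-conscious transcription of the $\epsilon=1$ argument.
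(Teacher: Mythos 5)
Your proposal follows essentially the same route as the paper: it reuses the structural Lemmas \ref{Lemma1ofSect32} and \ref{Lemma2ofSect32}, invokes Lemma \ref{Lemmaeps-1constscd} to fix the signs $c=-A^2$, $d=B^2$, integrates \eqref{eqgamma}--\eqref{eqbeta} into the mixed hyperbolic/trigonometric form \eqref{eqy2}, normalizes $\{C_2,C_1,D_1,D_2\}$ with $C_2$ time-like, and imposes the quadric constraint to recover the parametrization of Example \ref{exH3xR}. The argument is correct and matches the paper's proof step for step.
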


\section{Almost Ricci Solitons on Hypersurfaces of $\mathbb{Q}^{n}_\epsilon \times \mathbb{R}$}
In this section, we consider almost Ricci soliton on hypersurfaces of $\mathbb{Q}^{3}_\epsilon \times \mathbb{R}$
satisfying $ST=k_1 T$, where the potential vector field is taken to be the tangential part $T$ of $\frac{\partial}{\partial t}$. 

\begin{rem}
The vector field $T$ in \eqref{decompt} is the gradient of the height function 
\begin{equation}\label{Sect4Rem1Eq1}
h = \left \langle  f, \frac{\partial}{\partial x_{n+2}} \right \rangle,
\end{equation}
where $f \colon M^n \to \mathbb{Q}^n_{\epsilon} \times \mathbb{R}$ is the inclusion of hypersurface, \cite{MendonTojeiro2014}. Therefore, an almost Ricci soliton
with the potential vector field $T$ is a gradient soliton $(M,g,h,\lambda)$.
\end{rem}
First, we compute the Lie derivative of the metric $g$ and 
the components of Ricci tensor of any hypersurfaces in $\mathbb{Q}^{n}_\epsilon \times \mathbb{R}$.  
\begin{lemma}
\label{lemmaLiederiv}
Let $M$ be a hypersurface of  $\mathbb{Q}^{n}_\epsilon \times \mathbb{R}$. Then, the Lie derivative of the metric $g$ along $T$ satisfies 
the followings:
\begin{align}
\label{Liederv1}
    (\mathcal{L}_T g)(e_1, e_1)= -2\sin{\theta} e_1(\theta),\\
\label{Liederv2}
    (\mathcal{L}_T g)(e_i, e_a)= 2\cos{\theta} \omega_{1a}(e_i),
\end{align}
where $\theta$ is defined by \eqref{RWTS-etaandTDefNew} for $i=1,2,\dots,n$ and $a=2,\dots,n$.  
\end{lemma}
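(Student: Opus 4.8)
The plan is to compute the Lie derivative directly from the standard identity
\begin{equation*}
(\mathcal{L}_T g)(X,Y) = g(\nabla_X T, Y) + g(X, \nabla_Y T),
\end{equation*}
valid for all tangent vector fields $X,Y$, which follows from metric-compatibility and the torsion-freeness of the Levi-Civita connection $\nabla$. The two key ingredients I would bring in are the structural equation \eqref{paralleltang}, namely $\nabla_X T = \sigma SX$, and the self-adjointness of the shape operator $S$ with respect to $g$.

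First I would identify the normal coefficient $\sigma$. Comparing the decomposition \eqref{decompt} with the frame expression \eqref{RWTS-etaandTDefNew} shows that $T = \cos\theta\, e_1$ and $\sigma = \sin\theta$, so \eqref{paralleltang} reads $\nabla_X T = \sin\theta\, SX$. Substituting this into the Lie derivative formula and using $\langle X, SY\rangle = \langle SX, Y\rangle$, the two summands collapse into one:
\begin{equation*}
(\mathcal{L}_T g)(X,Y) = \sin\theta\,\langle SX, Y\rangle + \sin\theta\,\langle X, SY\rangle = 2\sin\theta\,\langle SX, Y\rangle.
\end{equation*}

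It then remains to evaluate this on the relevant frame pairs. Setting $X = Y = e_1$ and invoking \eqref{paralleltang2} gives $\langle Se_1, e_1\rangle = -e_1(\theta)$, which yields \eqref{Liederv1}. Setting $X = e_i$, $Y = e_a$ and invoking \eqref{paralleltang3} gives $\langle Se_i, e_a\rangle = \cot\theta\,\omega_{1a}(e_i)$; substituting and simplifying via $2\sin\theta\cot\theta = 2\cos\theta$ produces \eqref{Liederv2}.

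This argument is essentially a one-line computation once the pieces are assembled, so there is no genuine obstacle; the only points requiring care are the correct reading $\sigma = \sin\theta$ from the orthonormal frame and the observation that it is precisely the self-adjointness of $S$ that merges the two terms of the Lie derivative into a single symmetric expression $2\sin\theta\,\langle SX, Y\rangle$.
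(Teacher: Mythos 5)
Your proposal is correct and follows essentially the same route as the paper: the authors likewise start from $(\mathcal{L}_T g)(X,Y)=\langle\nabla_X T,Y\rangle+\langle X,\nabla_Y T\rangle$, apply \eqref{paralleltang} with $\sigma=\sin\theta$ to reduce it to $2\sin\theta\,\langle SX,Y\rangle$, and then evaluate via \eqref{paralleltang2} and \eqref{paralleltang3}. No gaps.
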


\begin{proof}
Suppose that $M$ is a hypersurface of  $\mathbb{Q}^{n}_\epsilon \times \mathbb{R}$. 
Then, from the definition of Lie derivative, we have
\begin{align}
\label{Liedervcal1}
	 (\mathcal{L}_{T}g)(X,Y) = \langle \nabla_{X}T,Y \rangle + \langle X,\nabla_{Y}T \rangle
\end{align}
for tangent vectors $X,Y$. 
Considering the equation \eqref{paralleltang}, \eqref{Liedervcal1} becomes 
\begin{align}
\label{Liedervcal2}
	(\mathcal{L}_{T}g)(X,Y) = 2 \sin \theta \langle SX,Y \rangle. 
\end{align}
From \eqref{paralleltang2} and \eqref{paralleltang3} in \eqref{Liedervcal2} for the orthonormal frame field $\{e_1, e_2, \dots, N\}$, we get desired equations. 
\end{proof}

\begin{lemma}
\label{lemmaRiccitensor}
Let $M$ be a hypersurface of $\mathbb{Q}^{n}_\epsilon \times \mathbb{R}$. 
Then, the components of the Ricci tensor of $M$ satisfies the following
equations:
\begin{align}
\label{Riccicurv1}
	Ric(e_1,e_1) &= -e_{1}(\theta) \cot \theta \omega_{1a}(e_a) - \cot^{2} \theta \omega_{1a}(e_1) ^2  + \epsilon  (n-1) \sin^2 \theta, \\
 \label{Riccicurv2}
	Ric(e_1,e_a) &= \cot^{2}\theta \left(\omega_{1a}(e_1)\omega_{1b}(e_b) - \omega_{1a}(e_b)\omega_{1b}(e_1)\right), \\
\label{Riccicurv3}
        Ric(e_a,e_b) &= -\cot^{2}\theta \omega_{1a}(e_1) \omega_{1b}(e_1)  -\cot \theta \omega_{1a}(e_b)e_{1}(\theta)  \\
	&+ \cot^{2}\theta \left(\omega_{1a}(e_b)\omega_{1c}(e_c) - \omega_{1a}(e_c)\omega_{1b}(e_c)\right) - \epsilon\delta_{ab}(n-2-\sin^2{\theta}).  \nonumber
\end{align}
where $\{e_1, e_2,\dots, e_n\}$ is a local orthonormal frame on $M$ and $a,b,c=2,\dots,n$. 
\end{lemma}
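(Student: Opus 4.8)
The plan is to compute the Ricci tensor directly from its defining formula \eqref{Riccicurv}, $\mathrm{Ric}(X,Y)=\sum_{i=1}^n\langle R(e_i,X)Y,e_i\rangle$, feeding in the Gauss equation \eqref{Gausseq} specialized to the orthonormal frame $\{e_1,\dots,e_n\}$ adapted to $T$. The Gauss equation gives $R$ as the sum of a curvature part $\langle h(\cdot,\cdot),h(\cdot,\cdot)\rangle$ and an ambient ``wedge'' part involving $\epsilon$ and the tangential field $T$. Since the computation splits cleanly along these two contributions, I would first handle the extrinsic (second fundamental form) piece and then the intrinsic/ambient piece, summing over $i$ at the end for each of the three cases $\mathrm{Ric}(e_1,e_1)$, $\mathrm{Ric}(e_1,e_a)$, and $\mathrm{Ric}(e_a,e_b)$.

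The key preliminary step is to express the components $\langle Se_i,e_j\rangle$ entirely in terms of $\theta$ and the connection forms $\omega_{1a}$, which is already supplied by \eqref{paralleltang2} and \eqref{paralleltang3}: namely $\langle Se_i,e_1\rangle=-e_i(\theta)$ and $\langle Se_i,e_a\rangle=\cot\theta\,\omega_{1a}(e_i)$. Substituting these into $\langle h(Y,Z),h(X,W)\rangle-\langle h(X,Z),h(Y,W)\rangle=\langle SY,Z\rangle\langle SX,W\rangle-\langle SX,Z\rangle\langle SY,W\rangle$ converts every occurrence of the shape operator into products of $\cot\theta$, $e_i(\theta)$, and $\omega_{1a}(e_j)$. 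In carrying out the sum over the frame index $i$, the terms $\langle SX,e_i\rangle\langle Se_i,W\rangle$ telescope into $\langle SX,SW\rangle$, and I would use \eqref{paralleltang1}/\eqref{Sect3AraDenk1} together with the relation $e_a(\theta)=-\cot\theta\,\omega_{1a}(e_1)$ to collapse the various $e_i(\theta)$ into the $\omega_{1a}$ appearing in the stated formulas. For the ambient part, since $T=\cos\theta\,e_1$ one has $\langle X,T\rangle=\cos\theta\,\delta_{X,e_1}$, so the wedge terms contribute the explicit $\epsilon\sin^2\theta$ (for $\mathrm{Ric}(e_1,e_1)$) and $\epsilon\delta_{ab}(n-2-\sin^2\theta)$ (for $\mathrm{Ric}(e_a,e_b)$) factors; the off-diagonal $\mathrm{Ric}(e_1,e_a)$ receives no ambient contribution, consistent with the stated formula having only the extrinsic $\cot^2\theta$ terms.

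The main obstacle I anticipate is purely organizational rather than conceptual: carefully bookkeeping the summation index $i$ versus the fixed free indices $a,b$, and correctly separating the single term $i=1$ from the terms $i=a\ge 2$ when expanding $\sum_i\langle Se_i,\cdot\rangle\langle Se_i,\cdot\rangle$. In particular, the split of $\langle SX,SW\rangle$ into a $\cot^2\theta\,\omega_{1a}(e_1)\omega_{1b}(e_1)$ piece and the $\cot^2\theta(\omega_{1a}(e_b)\omega_{1c}(e_c)-\cdots)$ cross terms, and the matching of $e_1(\theta)$-weighted terms, must be tracked with the summation convention over $a,b,c$ that the statement uses implicitly in \eqref{Riccicurv1}. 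I would verify each formula by checking its symmetry in the free indices and by cross-checking against the $\epsilon$-dependent ambient contribution, which is the easiest part to get a sign wrong. Once the bookkeeping is fixed, substituting \eqref{paralleltang2}, \eqref{paralleltang3}, and $\langle e_a,T\rangle=0$ makes each of \eqref{Riccicurv1}, \eqref{Riccicurv2}, \eqref{Riccicurv3} fall out by direct simplification.
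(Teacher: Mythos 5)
Your proposal is correct and follows essentially the same route as the paper: both substitute the shape-operator components from \eqref{paralleltang2}--\eqref{paralleltang3} into the Gauss equation \eqref{Gausseq} and then trace via \eqref{Riccicurv}, with the ambient wedge terms producing the $\epsilon$-dependent contributions (the paper merely records the intermediate curvature components $R(e_1,e_a)e_1$, $R(e_1,e_a)e_b$, $R(e_a,e_b)e_c$ explicitly before tracing, whereas you trace directly via $\mathrm{tr}(S)\langle SX,Y\rangle-\langle SX,SY\rangle$). Your observation that the ambient part contributes nothing to $\mathrm{Ric}(e_1,e_a)$ is correct and consistent with \eqref{Riccicurv2}.
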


\begin{proof}
By a direct calculation, we have the following nontrivial possibilities for the Riemannian curvature tensor in \eqref{Gausseq}:
\begin{align}
\label{curv1}
    R(e_1,e_a)e_1 &= \cot \theta \left(e_{1}(\theta) \omega_{1a}(e_b) +  \cot \theta \omega_{1a}(e_{1}) \omega_{1b}(e_1) \right) e_b
     - \epsilon \sin^{2} \theta e_{a},\\	
\label{curv2} 
    R(e_1,e_a)e_b  &= (\epsilon\sin^2{\theta}\delta_{ab}-\cot^2{\theta} \omega_{1a}(e_1)\omega_{1b}(e_1)-\cot{\theta} e_1(\theta) \omega_{1a}(e_b)) e_1 \notag\\          &+ \cot^2{\theta}(\omega_{1a}(e_b)\omega_{1c}(e_1)-\omega_{1b}(e_1)\omega_{1a}(e_c))e_c,\\
 \label{curv4}
    R(e_a,e_b)e_c &= \cot^{2}\theta (\omega_{1b}(e_{c})\omega_{1a}(e_{1})- \omega_{1a}(e_c) \omega_{1b}(e_1))e_1 \notag  \\
	&+ \cot^{2}\theta (\omega_{1b}(e_{c}) \omega_{1a}(e_d) - \omega_{1a}(e_{c}) \omega_{1b}(e_d))e_d + 
        \epsilon (\delta_{bc}e_a-\delta_{ac}e_b ).
\end{align}
From \eqref{Riccicurv}, we deduce the equations in Lemma \ref{lemmaRiccitensor}.  
\end{proof}

Combining the equations in Lemma \ref{lemmaLiederiv} and Lemma \ref{lemmaRiccitensor} with \eqref{defsoliton}, we get the following theorem. 

\begin{theorem}
\label{thmsolcondndim}
Let $M$ be a hypersurface of $\mathbb{Q}^n_\epsilon\times\mathbb{R}$. Then, $(M,g)$ admits an almost Ricci soliton $(M,g,T,\lambda)$ 
if and only if the following equations are satisfied:
\begin{align}
\label{solitoneq1}
	&-\sin \theta e_{1}(\theta) - e_{1}(\theta) \cot \theta  \omega_{1a}(e_a) 
 - \cot^{2} \theta  (\omega_{1a}(e_1))^2 + (n-1) \epsilon \sin^2 \theta = \lambda, \\
\label{solitoneq2}
	&\cos \theta \omega_{1a}(e_1) + \cot^{2}\theta \left(\omega_{1a}(e_1)\omega_{1b}(e_b) - \omega_{1a}(e_b)\omega_{1b}(e_1)\right) = 0, \\
\label{solitoneq3}
	&\cos \theta \omega_{1a}(e_b) -\cot^{2}\theta \omega_{1a}(e_1) \omega_{1b}(e_1)  -\cot \theta \omega_{1a}(e_b)e_{1}(\theta) \notag \\
	& + \cot^{2}\theta \left(\omega_{1a}(e_b)\omega_{1c}(e_c) - \omega_{1a}(e_c)\omega_{1b}(e_c)\right)-\epsilon\delta_{ab} (n-2-\sin^{2}\theta)= \delta_{ab}\lambda 
\end{align}
for $a,b,c=2,\dots,n$.
\end{theorem}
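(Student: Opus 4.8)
The plan is to treat the defining relation \eqref{defsoliton} of an almost Ricci soliton, with potential field $\xi=T$, as an identity between symmetric $(0,2)$-tensors, namely $\tfrac12\mathcal L_T g+\Ric=\lambda g$, and to use the fact that such an identity holds on all of $TM$ if and only if it holds when both arguments range over a frame. For this I would employ exactly the adapted orthonormal frame $\{e_1,\dots,e_n,N\}$ introduced in the remark following \eqref{parallelnor}, in which $e_1$ is proportional to $T$. Because all three tensors $\mathcal L_T g$, $\Ric$, and $g$ are symmetric, the independent components reduce to the three types $(e_1,e_1)$, $(e_1,e_a)$, and $(e_a,e_b)$ with $a,b\in\{2,\dots,n\}$, and verifying the scalar equation coming from each type is then equivalent to the full tensor identity.

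For each type I would substitute the value of $\tfrac12(\mathcal L_T g)(\cdot,\cdot)$ furnished by Lemma \ref{lemmaLiederiv} and the value of $\Ric(\cdot,\cdot)$ furnished by Lemma \ref{lemmaRiccitensor}, then set the sum equal to $\lambda\,g(\cdot,\cdot)=\lambda\,\delta_{ij}$. The pairing $(e_1,e_1)$ combines \eqref{Liederv1} with \eqref{Riccicurv1} and, since $g(e_1,e_1)=1$, yields \eqref{solitoneq1}. The pairing $(e_1,e_a)$ combines \eqref{Liederv2} (with $i=1$) and \eqref{Riccicurv2}; here the right-hand side vanishes because $\langle e_1,e_a\rangle=0$, producing \eqref{solitoneq2}. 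Finally the pairing $(e_a,e_b)$ combines \eqref{Liederv2} with \eqref{Riccicurv3} and gives \eqref{solitoneq3}, the right-hand side being $\lambda\,\delta_{ab}$.

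The computation is essentially bookkeeping, so the only points requiring attention are the following. To write the $(e_a,e_b)$ component of the Lie derivative in the form $\cos\theta\,\omega_{1a}(e_b)$ appearing in \eqref{solitoneq3}, one invokes the symmetry $\omega_{1a}(e_b)=\omega_{1b}(e_a)$ recorded just after \eqref{paralleltang3}; and one must confirm that the three listed types genuinely exhaust the independent components, so that no additional equation is hidden. Both directions of the ``if and only if'' then come for free from this reduction, since evaluating a symmetric tensor identity on a frame is an equivalence rather than a one-way implication. I do not anticipate any substantive obstacle beyond careful index handling.
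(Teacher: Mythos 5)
Your proposal is correct and coincides with the paper's argument: the paper proves Theorem \ref{thmsolcondndim} precisely by evaluating \eqref{defsoliton} on the adapted frame $\{e_1,\dots,e_n\}$ and substituting the components from Lemma \ref{lemmaLiederiv} and Lemma \ref{lemmaRiccitensor}, which by symmetry of the tensors involved reduces to the three types of pairings you list. No further comment is needed.
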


From now on, we study almost Ricci soliton on a hypersurface of $\mathbb{Q}^3_\epsilon\times\mathbb{R}$
satisfying $ST=k_1 T$. From Section \ref{SectQ3xR-3princip}, we have the principal curvatures and the connection forms of such hypersurfaces. 
Thus, Theorem \ref{thmsolcondndim} implies the following corollary. 

\begin{corollary}
\label{almostsolcor}
Let $M$ be a hypersurface of $\mathbb{Q}^3_\epsilon\times\mathbb{R}$ satisfying $ST=k_1 T$. 
Then, $(M,g)$ admits an almost Ricci soliton $(M,g,T,\lambda)$ if and only if the following equations are satisfied:
\begin{align}
\label{almostsoleq1}
    -\sin \theta e_1(\theta) - \cot{\theta} e_1(\theta) \left(\omega_{12}(e_2) + \omega_{13}(e_3) \right) + 2 \epsilon \sin^2 \theta  &= \lambda, \\
\label{almostsoleq2}
    \left(\cos \theta - \cot \theta e_1(\theta)\right)\omega_{12}(e_2) + \cot^2 \theta \omega_{12}(e_2)\omega_{13}(e_3) - \epsilon \cos^2 \theta &= \lambda, \\
\label{almostsoleq3}    
    \left(\cos \theta - \cot \theta e_1(\theta)\right)\omega_{13}(e_3) + \cot^2 \theta \omega_{12}(e_2)\omega_{13}(e_3) - \epsilon \cos^2 \theta &= \lambda.
\end{align}
\end{corollary}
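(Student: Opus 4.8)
The plan is to derive Corollary \ref{almostsolcor} as a direct specialization of Theorem \ref{thmsolcondndim} to the case $n=3$ under the class $\mathcal A$ hypothesis $ST=k_1T$. Since the theorem already records the equivalence between the almost Ricci soliton equation \eqref{defsoliton} and the system \eqref{solitoneq1}--\eqref{solitoneq3} for arbitrary $n$, the only work left is to simplify that system using the special structure of the connection forms that $ST=k_1T$ forces; the ``if and only if'' will follow automatically because the reductions I perform are equivalences.

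First I would exploit the hypothesis to pin down which connection forms vanish. Because $e_1$ is proportional to $T$ and $S$ is self-adjoint, $S$ preserves the orthogonal complement $T^\perp=\mathrm{span}\{e_2,e_3\}$ and is self-adjoint there; hence I may choose the orthonormal frame so that $e_2,e_3$ are principal directions and $S=\diag(k_1,k_2,k_3)$. Feeding this into \eqref{paralleltang3} with $i=1$ gives $\cot\theta\,\omega_{1a}(e_1)=\langle Se_1,e_a\rangle=0$, so $\omega_{1a}(e_1)=0$, while taking $i=b\neq a$ gives $\cot\theta\,\omega_{1a}(e_b)=\langle Se_b,e_a\rangle=0$, so $\omega_{1a}(e_b)=0$ whenever $a\neq b$. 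These reproduce the vanishing connection forms isolated in \eqref{Sect3AraDenk1}.

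Next I would substitute $\omega_{1a}(e_1)=0$ and the off-diagonal vanishing into \eqref{solitoneq1}--\eqref{solitoneq3} with $n=3$ and the index sums taken over $a,b,c\in\{2,3\}$. Equation \eqref{solitoneq1} loses its $\cot^2\theta\,(\omega_{1a}(e_1))^2$ term, its summed factor becomes $\omega_{12}(e_2)+\omega_{13}(e_3)$, and $(n-1)\epsilon\sin^2\theta=2\epsilon\sin^2\theta$, which is precisely \eqref{almostsoleq1}. Equation \eqref{solitoneq2} has every surviving factor containing a vanishing $\omega_{1a}(e_1)$, so it reduces to the trivial identity $0=0$. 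The content therefore lies entirely in the diagonal entries ($a=b$) of \eqref{solitoneq3}.

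The one step that needs care is the summed product term $\cot^2\theta\,(\omega_{1a}(e_b)\omega_{1c}(e_c)-\omega_{1a}(e_c)\omega_{1b}(e_c))$ in \eqref{solitoneq3}. For $a=b=2$ the off-diagonal vanishing collapses $\omega_{1c}(e_c)$ to $\omega_{12}(e_2)+\omega_{13}(e_3)$ and $\omega_{12}(e_c)\omega_{12}(e_c)$ to $\omega_{12}(e_2)^2$, leaving exactly $\cot^2\theta\,\omega_{12}(e_2)\omega_{13}(e_3)$; together with $-\epsilon(1-\sin^2\theta)=-\epsilon\cos^2\theta$ this yields \eqref{almostsoleq2}, and the symmetric computation for $a=b=3$ yields \eqref{almostsoleq3}. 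For $a\neq b$ both surviving products vanish and the equation is again trivial. The bookkeeping of this summation convention is the main (and only real) obstacle; once it is handled, the three nontrivial equations \eqref{almostsoleq1}--\eqref{almostsoleq3} are established, and since each manipulation is reversible the equivalence of Corollary \ref{almostsolcor} follows.
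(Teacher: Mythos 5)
Your proposal is correct and follows essentially the same route as the paper, which likewise obtains the corollary by specializing Theorem \ref{thmsolcondndim} to $n=3$ and substituting the vanishing connection forms $\omega_{1a}(e_1)=\omega_{1a}(e_b)=0$ ($a\neq b$) forced by $ST=k_1T$ (recorded in \eqref{Sect3AraDenk1}). Your bookkeeping of the summed terms in \eqref{solitoneq1}--\eqref{solitoneq3}, including the observation that \eqref{solitoneq2} and the off-diagonal cases of \eqref{solitoneq3} become trivial, matches what the paper leaves implicit.
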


\begin{lemma}
Let $M$ be a hypersurface of $\mathbb{Q}^3_\epsilon\times\mathbb{R}$ with the principal curvatures $k_1, k_2$ and $k_3$
satisfying $ST=k_1 T$. 
If $(M,g)$ admits an almost Ricci soliton $(M,g,T,\lambda)$, then we have the followings:
\begin{itemize}
\item [(i.)] $k_2=k_3$ and there is a smooth function $f$ such that $k_2=f(k_1)$,

\item [(ii.)] $k_2\neq k_3$ and there are smooth functions $f_1$ and $f_2$ such that $k_2=f_1(k_1)$ and $k_3=f_2(k_1)$. 
\end{itemize}
\end{lemma}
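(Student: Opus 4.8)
The plan is to turn the soliton system of Corollary~\ref{almostsolcor} into three scalar equations in $k_1,k_2,k_3,\theta,\lambda$ alone, to read off a dichotomy by subtraction, and then to produce the functional relations in each branch. First I would substitute the class $\mathcal A$ identities into Corollary~\ref{almostsolcor}: since $S$ is diagonal and $e_1$ is the $T$-direction, \eqref{Sect3AraDenk1} gives $e_1(\theta)=-k_1$, $\omega_{12}(e_2)=k_2\tan\theta$ and $\omega_{13}(e_3)=k_3\tan\theta$. Inserting these into \eqref{almostsoleq1}--\eqref{almostsoleq3} and using $\cot\theta\tan\theta=1$ collapses the three soliton equations to
\begin{align*}
k_1\sin\theta+k_1(k_2+k_3)+2\epsilon\sin^2\theta&=\lambda,\\
k_2\sin\theta+k_1k_2+k_2k_3-\epsilon\cos^2\theta&=\lambda,\\
k_3\sin\theta+k_1k_3+k_2k_3-\epsilon\cos^2\theta&=\lambda.
\end{align*}
Subtracting the third from the second yields $(k_2-k_3)(k_1+\sin\theta)=0$, which splits the analysis pointwise into the branch $k_2=k_3$ (case (i)) and the branch $k_1=-\sin\theta$ with $k_2\neq k_3$ (case (ii)); subtracting the second from the first records a further algebraic relation, which in case (ii) reduces to $(\epsilon-1)k_1^2+k_1(k_2+k_3)-k_2k_3+\epsilon=0$, and I will call it $(\ast)$.

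For case (i), I set $k_2=k_3=\mu$. The Codazzi equations \eqref{CodazziclassA4} and \eqref{CodazziclassA6} then read $e_3(\mu)=(k_2-k_3)\omega_{23}(e_2)=0$ and $e_2(\mu)=(k_2-k_3)\omega_{23}(e_3)=0$, so together with $e_a(\theta)=e_a(k_1)=0$ from \eqref{Sect3AraDenk1} all three functions $\mu,k_1,\theta$ are annihilated by $e_2$ and $e_3$. Since $\langle[e_2,e_3],e_1\rangle=0$ by \eqref{Sect3AraDenk1}, the distribution $\mathrm{span}\{e_2,e_3\}$ is integrable, so these functions are constant on its leaves and descend to functions of the single transverse parameter $s$ of the $e_1$-flow. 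Inverting $s\mapsto k_1(s)$ wherever $k_1'\neq0$ then gives $\mu=f(k_1)$, which is the assertion of (i); the locus where $k_1$ is locally constant must be handled separately, but there the Codazzi equation \eqref{CodazziclassA3} and the second algebraic relation over-determine $\mu$ and force it to be constant, so $f$ extends smoothly.

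For case (ii) I would use $k_1=-\sin\theta$, whence $\cos^2\theta=1-k_1^2$, together with the Gauss equation \eqref{Gausseq} for the $e_2$--$e_3$ plane, which gives $k_2k_3=-\epsilon\cos^2\theta=\epsilon(k_1^2-1)$ (the general-$\epsilon$ form of the computation \eqref{classH3xRthreedistinctDenk1}). Feeding this into $(\ast)$ produces $k_2+k_3=k_1-2\epsilon/k_1$, so that both $k_2+k_3$ and $k_2k_3$ are explicit smooth functions of $k_1$; in particular $k_1\neq0$ here. Consequently $k_2$ and $k_3$ are the two roots of a quadratic whose coefficients depend smoothly on $k_1$, and since $k_2\neq k_3$ the discriminant is nonzero, so the roots separate into smooth functions $k_2=f_1(k_1)$ and $k_3=f_2(k_1)$, which is (ii).

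The step I expect to be the real obstacle is the legitimacy of the clean Gauss relation $k_2k_3=-\epsilon\cos^2\theta$ in case (ii) and, equivalently, the claim that $k_2,k_3$ are genuinely functions of $k_1$ rather than varying along the level sets $\{k_1=\mathrm{const}\}$. Both hinge on showing that the off-diagonal connection forms $\omega_{23}(e_2)$ and $\omega_{23}(e_3)$ vanish, since only then do the transverse derivatives $e_a(k_b)$ vanish (by \eqref{CodazziclassA4}, \eqref{CodazziclassA6} and by differentiating $(\ast)$ along $e_2,e_3$) and only then does the $e_2$--$e_3$ Gauss computation collapse to $k_2k_3=-\epsilon\cos^2\theta$; note that the off-diagonal soliton equations are identically satisfied and give no control here. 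To establish the vanishing I would differentiate the algebraic soliton relations along $e_2$ and $e_3$, substitute the Codazzi identities, and close the system using the integrability (Gauss) equations for $\omega_{23}$: computing $\langle R(e_1,e_2)e_2,e_3\rangle$ and $\langle R(e_1,e_3)e_3,e_2\rangle$ yields homogeneous linear first-order ODEs for $\omega_{23}(e_2)$ and $\omega_{23}(e_3)$ along the $e_1$-flow, which, combined with the $e_2$--$e_3$ Gauss constraint, over-determine them and force both to vanish. I expect the bookkeeping of this closure, rather than any conceptual difficulty, to be where the work concentrates.
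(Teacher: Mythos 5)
Your reduction of Corollary \ref{almostsolcor} to three algebraic equations, the dichotomy $(k_2-k_3)(k_1+\sin\theta)=0$ obtained by subtraction, the relation $(\ast)$ (which is the paper's \eqref{lasteq}), and the treatment of case (i) via \eqref{CodazziclassA4} and \eqref{CodazziclassA6} all coincide with the paper's argument. The problem is in case (ii), and it is exactly the one you flag yourself.

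The identity $k_2k_3=-\epsilon\cos^2\theta$ comes from the $e_2$--$e_3$ Gauss equation only after one knows $\omega_{23}(e_2)=\omega_{23}(e_3)=0$; otherwise $\langle R(e_2,e_3)e_2,e_3\rangle$ carries the extra terms $e_2(\omega_{23}(e_3))-e_3(\omega_{23}(e_2))+\omega_{23}(e_2)^2+\omega_{23}(e_3)^2$. But by \eqref{CodazziclassA4} and \eqref{CodazziclassA6} the vanishing of $\omega_{23}(e_2)$ and $\omega_{23}(e_3)$ is \emph{equivalent} to $e_3(k_2)=e_2(k_3)=0$, i.e. to the transverse constancy you are trying to prove; in the paper $\omega_{23}=0$ appears only in \eqref{Sect3AraDenk2}, as a consequence of the functional relations. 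Your proposed closure is not carried out, and it is not clear it can be: differentiating $(\ast)$ along $e_a$ yields a single linear relation in the two unknowns $e_a(k_2)$, $e_a(k_3)$, which does not force them to vanish, and the curvature identities for $\omega_{23}$ reintroduce the same unknowns rather than over-determining them. So as written, case (ii) rests on an unproved (and essentially circular) input.

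The paper closes case (ii) without touching the Gauss equation. Having reduced the system to \eqref{almostsoleq1_n}--\eqref{almostsoleq23_n} and hence to \eqref{lasteq}, it differentiates \eqref{lasteq} along $e_1$ and substitutes the Codazzi expressions \eqref{CodazziclassA3} and \eqref{CodazziclassA5} for $e_1(k_2)$ and $e_1(k_3)$ (with $\omega_{1a}(e_a)=k_a\tan\theta$ and $e_1(\theta)=\sin\theta$). This produces a \emph{second} purely algebraic relation \eqref{diflasteq} among $k_1,k_2,k_3,\theta$; the two relations then determine $k_2$ and $k_3$ as functions of $\theta$ alone, and since $e_a(\theta)=0$ the transverse derivatives vanish and the functional relations follow with $k_1=-\sin\theta$. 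If you replace your Gauss-equation step by this $e_1$-differentiation, your argument goes through, and your formulas $k_2k_3=\epsilon(k_1^2-1)$, $k_2+k_3=k_1-2\epsilon/k_1$ become consequences rather than inputs.
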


\begin{proof}
Assume that $M$ is a hypersurface of $\mathbb{Q}^3_\epsilon\times\mathbb{R}$ with the principal curvatures $k_1, k_2$ and $k_3$,
such that $ST=k_1 T$ and that $(M,g,T,\lambda)$ is an almost Ricci soliton. Then, equations \eqref{almostsoleq1}-\eqref{almostsoleq3} hold.
From \eqref{almostsoleq2} and \eqref{almostsoleq3}, we obtain 
\begin{equation}
(\cos{\theta}-\cot{\theta} e_1(\theta))(\omega_{12}(e_2)-\omega_{13}(e_3))=0.
\end{equation}
Thus, we have the following cases:
\\
\textit{Case(i.)} $\omega_{12}(e_2)=\omega_{13}(e_3)$, that is, $k_2=k_3$. Using Codazzi equations in \eqref{CodazziclassA}, 
we find $e_2(k_2)=e_2(k_3)=0$. Hence, the desired result follows.
\\
\textit{Case(ii.)} $e_1(\theta)=\sin{\theta}$. Considering $\omega_{12}(e_2)=k_2\tan{\theta}$ and $\omega_{13}(e_3)=k_3\tan{\theta}$,
we derive 
\begin{align}
\label{almostsoleq1_n}
-\sin^2{\theta}-\sin{\theta}(k_2+k_3)+2\epsilon\sin^2{\theta}&=\lambda,\\
\label{almostsoleq23_n}
k_2k_3-\epsilon\cos^2{\theta}&=\lambda.
\end{align}
From \eqref{almostsoleq1_n} and \eqref{almostsoleq23_n}, we get
\begin{equation}
\label{lasteq}
(\epsilon-1)\sin^2{\theta}+\epsilon-\sin{\theta}\;(k_2+k_3)-k_2k_3=0.
\end{equation}
Differentiating \eqref{lasteq} with respect to $e_1$, we have 
\begin{align}
\label{diflasteq}
&(4\varepsilon-2)\sin^2{\theta}\cos{\theta}+(2\epsilon-1)\cos{\theta}\sin{\theta} (k_2+k_3) 
+\tan{\theta}\sin{\theta}(k_2^2+k_3^2-k_1(k_2+k_3))\notag\\
&+k_2k_3\tan{\theta}(k_2+k_3-2k_1)=0.
\end{align}
Considering \eqref{lasteq} and \eqref{diflasteq} with $k_1=-e_1(\theta)$, it can be seen that $k_2$ and $k_3$ are functions of $\theta$. 
Since $e_2(\theta)=e_3(\theta)=0$, we have $e_2(k_2)=e_2(k_3)=e_3(k_2)=e_3(k_3)=0$. Thus, we get the obtained result. 
\end{proof}

In case (i), the classification is given by Theorem \ref{thmk2eqk3} in \cite{Dillen09}. 
Based on this result, we proceed to examine whether such rotational hypersurfaces of $\mathbb{Q}^3_\epsilon\times\mathbb{R}$ admit an almost Ricci soliton.

\begin{proposition}
\label{almostRiccirot}
Let $M$ be a rotational hypersurface of $\mathbb{Q}^3_\epsilon\times\mathbb{R}$.
Then, the followings hold:
\begin{itemize}
\item [i.] $(M,g, T, \lambda)$ is an almost Ricci soliton,
where the hypersurface $M$ of $\mathbb{S}^3\times\mathbb{R}$ is parametrized by
\begin{equation}
\label{rothypS3xR}
{\bf x}(s,v,w)= (\cos{s}, \sin{s}\cos{v}\sin{w}, \sin{s}\cos{v}\cos{w}, \sin{s}\sin{v}, a(s))
\end{equation}
for a smooth function $a(s)$ satisfying 
\begin{equation}
\label{rothypS3xRafunc}
a''(s)=\frac{(1+a'(s)^2)(a'(s)^2\cot^2{s}+a'(s)\cot{s}-a'(s)^2-2)}{1+a'(s)\cot{s}}
\end{equation}
and $\lambda$ is given by 
\begin{equation}
\label{rothypS3xRlambda}
\lambda = \frac{a'(s)(2a'(s)^2\cot^3{s}+3a'(s)\cot^2{s}-(2a'(s)^2+1)\cot{s}-a'(s))}{(1+a'(s)\cot{s})(1+a'(s)^2)},
\end{equation}

\item [ii.] $(M,g, T, \lambda)$ is an almost Ricci soliton
where the hypersurface $M$ of $\mathbb{H}^3\times\mathbb{R}$ is parametrized by
\begin{equation}
\label{rothypH3xR1}
{\bf x}(s,v,w) = (\cosh{s}\cosh{v},\cosh{s}\sinh{v}\sin{w},\cosh{s}\sinh{v}\cos{w},\sinh{s}, a(s))
\end{equation}
for a smooth function $a(s)$ satisfying 
\begin{equation}
\label{rothypH3xRa1func}
a''(s)=\frac{(1+a'(s)^2)(a'(s)^2\tanh^2{s}+a'(s)\tanh{s}-a'(s)^2-2)}{1+a'(s)\tanh{s}}
\end{equation}
and $\lambda$ is given by 
\begin{equation}
 \label{rothypH3xR1lambda}   
 \lambda = \frac{a'(s)(2a'(s)^2\tanh^3{s}+3a'(s)\tanh^2{s}-(2a'(s)^2+1)\tanh{s}-a'(s))}{(1+a'(s)\tanh{s})(1+a'(s)^2)},
\end{equation}

\item [iii.] $(M,g, T, \lambda)$ is an almost Ricci soliton
where the hypersurface $M$ of $\mathbb{H}^3\times\mathbb{R}$ is parametrized by
\begin{equation}
\label{rothypH3xR2}
{\bf x}(s,v,w) = (\cosh{s},\sinh{s}\cos{v}\sin{w},\sinh{s}\cos{v}\cos{w},\sinh{s}\sin{v},a(s))
\end{equation}
for a smooth function $a(s)$ satisfying 
\begin{equation}
\label{rothypH3xRa2func}
a''(s)=\frac{(1+a'(s)^2)(a'(s)^2\coth^2{s}+a'(s)\coth{s}-a'(s)^2-2)}{1+a'(s)\coth{s}}
\end{equation}
and $\lambda$ is given by
\begin{equation}
 \label{rothypH3xR2lambda}   
\lambda = \frac{a'(s)(2a'(s)^2\coth^3{s}+3a'(s)\coth^2{s}-(2a'(s)^2+1)\coth{s}-a'(s))}{(1+a'(s)\coth{s})(1+a'(s)^2)},
\end{equation}

\item [iv.] $(M,g, T, \lambda)$ is an almost Ricci soliton
where the hypersurface $M$ of $\mathbb{H}^3\times\mathbb{R}$ is parametrized by
\begin{equation}
\label{rothypH3xR3}
{\bf x}(s,v,w) = \left(s,sv,sw,-\frac{1}{2s}-\frac{s}{2}(v^2+w^2),a(s)\right)
\end{equation}
for a smooth function $a(s)$ satisfying 
\begin{equation}
\label{rothypH3xRa3func}
a''(s) =\frac{s^3a'(s)^3-3s^2a'(s)^2-2}{s^2(1+s\;a'(s))}
\end{equation}
and $\lambda$ is given by
\begin{equation}
 \label{rothypH3xR3lambda}   
 \lambda = \frac{sa'(s)(2s^3a'(s)^3-s^2a'(s)^2+2sa'(s)-1)}{(1+s^2a'(s)^2)^2}.
\end{equation}
\end{itemize}
\end{proposition}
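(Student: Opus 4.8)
The plan is to prove the proposition by a direct verification against the soliton equations \eqref{almostsoleq1}--\eqref{almostsoleq3} of Corollary \ref{almostsolcor}. By the preceding lemma, a class $\mathcal{A}$ hypersurface of $\mathbb{Q}^3_\epsilon\times\mathbb{R}$ carrying an almost Ricci soliton with potential $T$ and satisfying $k_2=k_3$ is rotational, and by Theorem \ref{thmk2eqk3} such hypersurfaces are classified; the four position vectors \eqref{rothypS3xR}, \eqref{rothypH3xR1}, \eqref{rothypH3xR2} and \eqref{rothypH3xR3} realise the distinct admissible rotation types, namely the spherical rotation in $\mathbb{S}^3\times\mathbb{R}$ and the spherical, hyperbolic and parabolic (horospherical) rotations in $\mathbb{H}^3\times\mathbb{R}$. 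For each family I would run the same computation and then specialise.

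First, from the explicit parametrisation I would form the coordinate fields $\mathbf{x}_s,\mathbf{x}_v,\mathbf{x}_w$, check their mutual orthogonality, and normalise them to an orthonormal frame with $e_1=\mathbf{x}_s/|\mathbf{x}_s|$ proportional to $T$. In the spherical and hyperbolic families a short computation gives $|\mathbf{x}_s|^2=1+a'(s)^2$, and pairing the unit field $\tfrac{\partial}{\partial x_5}=(0,0,0,0,1)$ against $e_1$ in \eqref{RWTS-etaandTDefNew} yields $\cos\theta=a'/\sqrt{1+a'^2}$, hence $\cot\theta=a'$ and, after differentiating, $e_1(\theta)=-a''/(1+a'^2)^{3/2}$.

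Next I would compute the connection forms $\omega_{12}(e_2)=\langle\widehat{\nabla}_{e_2}e_1,e_2\rangle$ and $\omega_{13}(e_3)=\langle\widehat{\nabla}_{e_3}e_1,e_3\rangle$ directly from the flat ambient connection, the correction term in \eqref{LeviCivitaproduct} dropping out because $e_2$ and $e_3$ are orthogonal to both $e_1$ and $T$. In each case these turn out to be equal, with common value $g(s)\sin\theta$, where $g=\cot s,\ \tanh s,\ \coth s$ for the three spherical/hyperbolic families; by \eqref{paralleltang3} this equality is exactly $k_2=k_3$, so \eqref{almostsoleq2} and \eqref{almostsoleq3} coincide and the soliton system reduces to the pair \eqref{almostsoleq1}, \eqref{almostsoleq2}. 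Substituting the quantities above and eliminating $\lambda$ between them should collapse to a single relation of the shape $a''(1+a'g)=(1+a'^2)\,P(a',g)$, which I expect to match the stated ODEs \eqref{rothypS3xRafunc}, \eqref{rothypH3xRa1func}, \eqref{rothypH3xRa2func}; feeding this ODE back into \eqref{almostsoleq2} then produces the closed forms \eqref{rothypS3xRlambda}, \eqref{rothypH3xR1lambda}, \eqref{rothypH3xR2lambda} for $\lambda$.

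The bulk of the work, and the main source of error, lies in the per-family computation of the two connection forms and the subsequent algebraic simplification; in particular the ambient-curvature contributions carrying the factor $\epsilon$ (the terms $2\epsilon\sin^2\theta$ and $\epsilon\cos^2\theta$) must be tracked carefully, since their sign is what distinguishes the spherical from the hyperbolic ambient. The parabolic family \eqref{rothypH3xR3} is genuinely separate: its orbits are horospheres rather than round or equidistant spheres, so neither the normalisation $|\mathbf{x}_s|^2=1+a'^2$ nor a trigonometric orbit factor $g$ carries over, and one obtains instead the rational relation \eqref{rothypH3xRa3func} with $\lambda$ given by \eqref{rothypH3xR3lambda}. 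Once all four reductions are completed, matching them against the displayed ODEs and $\lambda$-formulas finishes the proof.
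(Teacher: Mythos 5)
Your proposal is correct and follows essentially the same route as the paper: substitute the geometric data of each rotational hypersurface into the soliton system \eqref{almostsoleq1}--\eqref{almostsoleq3} of Corollary \ref{almostsolcor}, observe that \eqref{almostsoleq2} and \eqref{almostsoleq3} coincide since $k_2=k_3$, and extract the ODE for $a$ by equating the two resulting expressions for $\lambda$ (indeed, with $\cot\theta=a'$ and $\omega_{12}(e_2)=\omega_{13}(e_3)=\cot s\,\sin\theta$ your elimination reproduces \eqref{rothypS3xRafunc} exactly). The only difference is that the paper imports the principal curvatures from \cite{Dillen09} rather than recomputing the frame and connection forms from the parametrization, which is immaterial.
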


\begin{proof}
Assume that $(M,g, T, \lambda)$ is an almost Ricci soliton, where 
the hypersurface $M$ is a rotational hypersurface in $\mathbb{S}^3\times\mathbb{R}$ given by \eqref{rothypS3xR}. 
From \cite{Dillen09}, we know that the principal curvatures $k_1$ and $k_2=k_3$ of $M$ are given by 
\begin{equation}
k_1=-\frac{a''(s)}{(1+a'(s)^2)^{3/2}},\;\;\;k_2=k_3=-\frac{a'(s)}{(1+a'(s)^2)^{1/2}}
\end{equation}
Then, equations \eqref{almostsoleq1} and \eqref{almostsoleq2} lead to \eqref{rothypS3xRlambda} and \eqref{rothypS3xRafunc},
which characterize the functions $a(s)$ and $\lambda$, respectively.
We provide the proof only for the rotational hypersurface in case (i), since the other cases can be treated in a similar way.
\end{proof}

For case (ii), Theorems \ref{classS3xRthreedistinct} and \ref{classH3xRthreedistinct} provide the complete classification. 
Based on these results, we examine whether the corresponding hypersurfaces admit an almost Ricci soliton.

\begin{proposition}
\label{almostRicciother}
There does not exist an almost Ricci soliton on $(M,g, T, \lambda)$, where $M$ is a hypersurface of $\mathbb{Q}^3_\epsilon\times\mathbb{R}$ 
given in Example \ref{exS3xR} and Example \ref{exH3xR}, respectively.    
\end{proposition}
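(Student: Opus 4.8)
The plan is to argue by contradiction, using crucially that the hypersurfaces of Example~\ref{exS3xR} and Example~\ref{exH3xR} carry three distinct principal curvatures, which throws us into Case~(ii) of the preceding lemma. So suppose that one of these hypersurfaces admits an almost Ricci soliton $(M,g,T,\lambda)$. Since $k_2\neq k_3$ everywhere on $M$, Case~(i) of the preceding lemma is excluded and we must be in Case~(ii); in particular $e_1(\theta)=\sin\theta$, and the soliton system of Corollary~\ref{almostsolcor} collapses to the reduced relations \eqref{almostsoleq1_n} and \eqref{almostsoleq23_n}, equivalently to the single constraint \eqref{lasteq}.

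The next step is to express every quantity through the profile functions $\alpha_1,\alpha_2$ of the explicit parametrization. A direct computation with the frame \eqref{orthexS3xR} (resp.\ \eqref{orthexH3xR}) and the decomposition \eqref{RWTS-etaandTDefNew} gives $\sin\theta=\alpha_1'$ and $\cos\theta=\alpha_2'$, while the shape operator \eqref{shapeopexS3xR} (resp.\ \eqref{shapeopexH3xR}) supplies $k_2$ and $k_3$ explicitly; in particular $k_2k_3=-\epsilon(\alpha_2')^2=-\epsilon\cos^2\theta$ and $k_2+k_3=2\alpha_2'\cot 2\alpha_1$ (resp.\ $2\alpha_2'\coth 2\alpha_1$). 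Because $e_1=\partial/\partial s$, the constraint $e_1(\theta)=\sin\theta$ reads $\theta'=\sin\theta$; combined with $\alpha_1'=\sin\theta$ this yields $\theta=\alpha_1+c_0$ for some constant $c_0$, and, together with the unit-speed relation \eqref{positionexs3xREq2} (resp.\ \eqref{positionexH3xREq2}), the profile equations $\alpha_1''=\alpha_1'\alpha_2'$ and $\alpha_2''=(\alpha_2')^2-1$.

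Substituting these data into \eqref{lasteq} reduces the existence of the soliton to a single functional identity in $\alpha_1$ that would have to hold on an open interval. For $\epsilon=1$ this identity is purely trigonometric: solving \eqref{lasteq} for $k_2+k_3$ and clearing $\sin 2\alpha_1$ turns it into a finite combination of $\sin 2\alpha_1$, $\sin(4\alpha_1+2c_0)$ and a constant, in which the coefficient of $\sin 2\alpha_1$ is a nonzero absolute constant; since these functions are linearly independent, the identity cannot hold. For $\epsilon=-1$ the same substitution produces an identity of the form $P(\alpha_1)\coth 2\alpha_1=Q(\alpha_1)$, with $P,Q$ trigonometric polynomials and $P\not\equiv 0$; by analytic continuation this would force the aperiodic function $\coth 2\alpha_1$ to coincide with the periodic ratio $Q/P$ (equivalently, matching the behaviour as $\alpha_1\to 0$ yields incompatible constant terms), which is impossible. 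Either way we reach a contradiction, so no such soliton exists.

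I expect the genuine obstacle to be this last step. The principal curvatures of the two examples depend on $\alpha_1$ through $\tan\alpha_1,\cot\alpha_1$ (resp.\ $\tanh\alpha_1,\coth\alpha_1$), whereas \eqref{lasteq} forces $k_2+k_3$ to be a prescribed function of $\theta$ alone; showing that these two dependencies are genuinely incompatible is the crux. Concretely this means carrying out the frequency comparison for $\epsilon=1$ and the periodic-versus-aperiodic (or pole-matching) comparison for $\epsilon=-1$ carefully enough to rule out every value of the integration constant $c_0$, including the degenerate ones with $\sin 2c_0=0$ that do not immediately create a pole.
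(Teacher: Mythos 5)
Your argument is correct, and its endgame is genuinely different from the paper's. Both proofs reach the same intermediate point: the off-diagonal soliton equations force $\alpha_1''=\alpha_1'\alpha_2'$ (equivalently $e_1(\theta)=\sin\theta$, i.e.\ Case~(ii) of the preceding lemma), after which everything reduces to the single constraint \eqref{lasteq}, which for the explicit examples reads $\cot 2\alpha_1=\frac{2-\alpha_1'^2}{2\alpha_1'\alpha_2'}$ for $\epsilon=1$ and its $\coth$ analogue for $\epsilon=-1$ (the paper's \eqref{propeq1} and \eqref{propeq1h}). The paper then differentiates this relation once more and eliminates $\alpha_1$ between the two resulting expressions for $\cot^2 2\alpha_1$ (resp.\ $\coth^2 2\alpha_1$) in terms of $\lambda$, landing on a quadratic with constant coefficients ($3\lambda^2+6\lambda-8=0$, resp.\ $5\lambda^2-10\lambda+24=0$) that forces $\lambda$ to be constant (already excluded) or has no real root. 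You instead integrate: $\theta'=\sin\theta=\alpha_1'$ gives $\theta=\alpha_1+c_0$, turning the constraint into the explicit identity $\cot(2\alpha_1)\sin(2\alpha_1+2c_0)=\tfrac12\bigl(3+\cos(2\alpha_1+2c_0)\bigr)$, i.e.\ $\tfrac14\sin(4\alpha_1+2c_0)+\tfrac34\sin 2c_0-\tfrac32\sin 2\alpha_1=0$ after clearing $\sin 2\alpha_1$, which fails by linear independence since the coefficient of $\sin 2\alpha_1$ is $-\tfrac32\neq 0$; for $\epsilon=-1$ the identity $\coth(2\alpha_1)\sin(2\alpha_1+2c_0)=-2-\sin^2(\alpha_1+c_0)$ fails because the right side is $\le -2$ while the left side vanishes at any zero of $\sin(2\alpha_1+2c_0)$ (and tends to $\pm1$ as $\alpha_1\to 0^+$ in the degenerate case $\sin 2c_0=0$). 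Your route buys a more conceptual contradiction (a frequency/boundedness mismatch) and disposes of the Ricci and almost Ricci cases in one stroke, at the price of an analytic-continuation step and the case analysis on $c_0$, which you flag but leave slightly informal for $\epsilon=-1$; the paper's elimination is more computational but never needs to integrate the ODE. Both are sound.
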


\begin{proof}
\textit{Case(i.)}
Assume that $M$ is a hypersurface of $\mathbb{S}^3\times\mathbb{R}$ given by \eqref{positionexs3xR},
that is, $\epsilon=1$
and $(M,g,T,\lambda)$ is an almost Ricci soliton. 
From Example \eqref{exS3xR}, we know principal curvatures and connection forms of $M$. 
If we substitute these into \eqref{almostsoleq1} - \eqref{almostsoleq3}, we have the followings:
\begin{align} 
\label{s_inv_e1}
    \alpha_2''(s)+ \alpha^{\prime\prime}_1(s)(\tan(\alpha_1(s)) -\cot(\alpha_1(s))) +2\alpha'^2_1(s) = \lambda\\
\label{s_inv_e2}    
    (\alpha''_1(s) -\alpha'_1(s)\alpha'_2(s))\tan(\alpha_1(s))  - 2\alpha'^2_2(s)= \lambda\\
\label{s_inv_e3}
    (\alpha'_1(s)\alpha'_2(s) - \alpha''_1(s))\cot(\alpha_1(s)) - 2\alpha'^2_2(s)= \lambda.
\end{align}
From \eqref{s_inv_e2} and \eqref{s_inv_e3}, we find 
\begin{equation}
\alpha_1''(s)=\alpha_1'(s)\alpha_2'(s),\;\; \lambda=-2\alpha'^2_2(s).
\end{equation}
Substituting $\lambda$ into the equation \eqref{s_inv_e1}, we obtain
\begin{align} 
\label{s_inv_e1_fullsimplify}
    2 + \alpha''_2(s) = \alpha''_1(s)(\cot{(\alpha_1(s))}-\tan{(\alpha_1(s))}).
\end{align}
If $(M,g,T,\lambda)$ is a Ricci soliton, then $\lambda=-2(\alpha'_2(s))^2$ is a constant. 
On the other hand, $\alpha_1'(s)^2+\alpha_2'(s)^2=1$ implies that
$\alpha_1'(s)$ is also a constant. From the equation \eqref{s_inv_e1_fullsimplify},
we get a contradiction. Thus, $\lambda$ can not be a constant.
Now, we will show that $(M,g,T,\lambda)$ is not an almost Ricci soliton. 
Considering $\alpha_1''(s)=\alpha_1'(s)\alpha_2'(s)$ and $\alpha_1'(s)\alpha_1''(s)+\alpha_2'(s)\alpha_2''(s)=0$,
the equation \eqref{s_inv_e1_fullsimplify} becomes 
\begin{equation}
\label{propeq1}
\cot{(2\alpha_1(s))}=\frac{2-\alpha_1'(s)^2}{2\alpha_1'(s)\alpha_2'(s)}.
\end{equation}
Moreover, the equation \eqref{s_inv_e1} gives 
\begin{equation}
\label{propeq2}
\cot^2(2\alpha_1(s)) =\frac{-\lambda^2+4\lambda-4}{4\lambda^2+8\lambda}.
\end{equation}
Differentiating \eqref{propeq1} with respect to $s$, we obtain 
\begin{equation}
\label{propeq3}
\cot^2{(2\alpha_1(s))}=\frac{-4\lambda^2-2\lambda+4}{4\lambda^2+8\lambda}.
\end{equation}
From \eqref{propeq2} and \eqref{propeq3}, we get that $\lambda$ satisfies $3\lambda^2+6\lambda-8=0$
which means $\lambda$ is a real constant. On the other hand, we know that $\lambda$ can not be a constant. 
Thus, $(M,g,T,\lambda)$ is not an almost Ricci soliton.

\textit{Case(ii.)}
Assume that $M$ is a hypersurface of $\mathbb{H}^3\times\mathbb{R}$ given by \eqref{positionexH3xR}
and $(M,g,T,\lambda)$ is an almost Ricci soliton. 
From Example \eqref{exH3xR}, we know principal curvatures and connection forms of $M$. 
If we substitute these into \eqref{almostsoleq1} - \eqref{almostsoleq3}, we have the followings:
\begin{align} 
\label{h_inv_e1}
   \alpha_2''(s) - \alpha''_1(s)\left(\tanh(\alpha_1(s)) + \coth(\alpha_1(s))\right) - 2\alpha'_1(s)^2= \lambda,\\
\label{h_inv_e2}    
    (\alpha'_1(s)\alpha'_2(s)- \alpha''_1(s))\tanh(\alpha_1(s))  + 2 \alpha'_2(s)^2 = \lambda\\
\label{h_inv_e3}
    (\alpha'_1(s)\alpha'_2(s) - \alpha''_1(s))\coth(\alpha_1(s))  + 2 \alpha'_2(s)^2 = \lambda.
\end{align}
From \eqref{h_inv_e2} and \eqref{h_inv_e3}, we find 
\begin{equation}
\alpha_1''(s)=\alpha_1'(s)\alpha_2'(s),\;\; \lambda=2 \alpha'_2(s)^2.
\end{equation}
Substituting $\lambda$ into the equation \eqref{h_inv_e1}, we obtain
\begin{align} \label{h_inv_e1_fullsimplify}
   \alpha''_2(s) -2 = \alpha''_1(s)\left(\tanh(\alpha_1(s))+\coth(\alpha_1(s))\right).
\end{align}
Similarly, if $(M,g,T,\lambda)$ is a Ricci soliton, the equation \eqref{h_inv_e1_fullsimplify} gives a contradiction.
Thus, it can not be a Ricci soliton. 
Now, we will show that $(M,g,T,\lambda)$ is not an almost Ricci soliton. 
Considering $\alpha_1''(s)=\alpha_1'(s)\alpha_2'(s)$ and $\alpha_1'(s)\alpha_1''(s)+\alpha_2'(s)\alpha_2''(s)=0$,
the equation \eqref{h_inv_e1_fullsimplify} becomes 
\begin{align}
\label{propeq1h}
   \coth{(2\alpha_1(s))}=\frac{-2-\alpha_1'(s)^2}{2\alpha_1'(s)\alpha_2'(s)}.
\end{align}
Moreover, the equation \eqref{h_inv_e1} gives 
\begin{equation}
\label{propeq2_h}
\coth^2(2\alpha_1(s)) = \frac{(\lambda-6)^2}{4\lambda(2-\lambda)}.
\end{equation}
Differentiating \eqref{propeq1h} with respect to $s$, we obtain 
\begin{align*}
    \coth^2(2\alpha_1(s)) =\frac{2\lambda^2+\lambda-6}{2\lambda(\lambda-2)}.
\end{align*}
Considering these equations, we obtain 
\begin{align} \label{cond_on_lambda} 
   5\lambda^2-10\lambda+24=0
\end{align}
which gives a contradiction.  
Thus, $(M,g,T,\lambda)$ is not an almost Ricci soliton.
\end{proof}

Using Proposition \ref{almostRiccirot} and Proposition \ref{almostRicciother}, we give the following theorem, directly.

\begin{theorem}
Let $M$ be a hypersurface of $\mathbb{Q}^3_\epsilon\times\mathbb{R}$ satisfying $ST=k_1 T$. 
Then,  $(M,g, T, \lambda)$ is an almost Ricci soliton if and only if $M$ is a rotational hypersurface of $\mathbb{Q}^3_\epsilon\times\mathbb{R}$
described in Proposition \ref{almostRiccirot}. Moreover,  $(M,g, T, \lambda)$ is a gradient almost Ricci soliton $(M,g, h, \lambda)$, where $h$ is the height function  defined by \eqref{Sect4Rem1Eq1}.
\end{theorem}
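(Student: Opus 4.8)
The plan is to obtain the equivalence by assembling the case-dichotomy already established for solitons with the two classification results, and then to read off the gradient refinement from the identification of $T$ as a gradient. The heavy lifting lives in the earlier propositions, so I expect the theorem itself to follow essentially by a correctly ordered synthesis.

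First I would prove the forward direction. Assuming $(M,g,T,\lambda)$ is an almost Ricci soliton with $ST = k_1 T$, the soliton equations \eqref{almostsoleq1}--\eqref{almostsoleq3} of Corollary \ref{almostsolcor} are in force, and the lemma immediately preceding Proposition \ref{almostRiccirot} reduces the situation to exactly two possibilities: either $k_2 = k_3$ (with $k_2$ a function of $k_1$), or $k_1,k_2,k_3$ are pairwise distinct with $k_2 = f_1(k_1)$ and $k_3 = f_2(k_1)$. The three-distinct-curvature alternative is precisely the hypothesis of Theorems \ref{classS3xRthreedistinct} and \ref{classH3xRthreedistinct}, so it would force $M$ to be locally congruent to the hypersurfaces of Example \ref{exS3xR} or Example \ref{exH3xR}; but Proposition \ref{almostRicciother} shows these carry no almost Ricci soliton with potential $T$, a contradiction. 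Hence only the case $k_2 = k_3$ survives, and Theorem \ref{thmk2eqk3} then identifies $M$ as an open part of a rotational hypersurface. Substituting the explicit principal curvatures of such a rotational profile into \eqref{almostsoleq1} and \eqref{almostsoleq2}---the computation already carried out in Proposition \ref{almostRiccirot}---constrains the profile function $a(s)$ by the stated ODE and fixes $\lambda$ by the stated formula, placing $M$ in one of the four families listed there.

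For the converse I expect no new work: Proposition \ref{almostRiccirot} asserts directly that each listed parametrization, with $a(s)$ solving its ODE and $\lambda$ given by its formula, satisfies the equations of Corollary \ref{almostsolcor} and is therefore an almost Ricci soliton $(M,g,T,\lambda)$, so one simply cites that proposition. The ``moreover'' clause is equally immediate: recalling that the potential field satisfies $T = \grad h$, where $h = \langle f, \partial/\partial x_{n+2}\rangle$ is the height function \eqref{Sect4Rem1Eq1}, a potential that is itself a gradient makes the soliton a gradient almost Ricci soliton $(M,g,h,\lambda)$ by definition.

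The genuinely substantive input is the elimination of the three-distinct-curvature case, namely the nonexistence result of Proposition \ref{almostRicciother}; everything else is a matter of invoking the classification theorems and the soliton characterization in the correct order. I therefore expect the main obstacle to be already absorbed into the proof of Proposition \ref{almostRicciother}, with the present theorem following by the case analysis and citations described above.
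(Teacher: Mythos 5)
Your proposal is correct and follows essentially the same route as the paper: the paper derives this theorem directly from the case dichotomy of the preceding lemma together with Proposition \ref{almostRiccirot} (the rotational case, via Theorem \ref{thmk2eqk3}) and Proposition \ref{almostRicciother} (elimination of the three-distinct-curvature case via Theorems \ref{classS3xRthreedistinct} and \ref{classH3xRthreedistinct}), with the gradient clause coming from $T=\grad h$. Your synthesis is, if anything, spelled out in more detail than the paper's one-line justification.
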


\section*{Acknowledgements}
This work was carried out during the 1001 project supported by the Scientific and Technological Research Council of T\"urkiye (T\"UB\.ITAK)  (Project Number: 121F352). This article is based on research conducted as part of the first author's PhD dissertation.

\section*{Declarations}


\textbf{Data Availability.} Data sharing not applicable to this article because no datasets were generated or analysed during the current study.

\textbf{Funding.} The authors have not disclosed any funding.

\textbf{Code availability.} N/A.

\textbf{Conflicts of interest.} The authors have not disclosed any competing interests.

\end{document}